\numberwithin{equation}{section}
\theoremstyle{plain}
\newtheorem{theorem}{Theorem}[section]
\newtheorem{lemma}[theorem]{Lemma}
\newtheorem{corollary}[theorem]{Corollary}
\theoremstyle{definition}
\theoremstyle{remark}
\begin{document}

\title{Two dimensional anisotropic mean curvature flow with\\contact angle condition}
\date{}

\author{Can Cui, Nung Kwan Yip \\
Department of Mathematics, Purdue University, West Lafayette, 47907 \\
cui147@purdue.edu, yipn@purdue.edu
}



\maketitle

\begin{abstract}
In this paper, we study surfaces which evolve by anisotropic mean curvature flow with contact angle boundary condition over a strictly convex domain in $\mathbb{R}^2$. We establish a prior gradient estimate for smooth solutions to this boundary value problem. The same approach can also handle Dirichlet boundary condition in $\mathbb{R}^n$, $n\geq 2$. For both problems, we prove that the solutions converge to one that is translation invariant in time.\\

\noindent
\textbf{Keywords}: Anisotropic mean curvature flow, contact angle, gradient estimate, asymptotic behavior.
\end{abstract}

\section{Introduction}
The evolution of an $n$-dimensional hypersurface $M(t) \subseteq \mathbb{R}^{n+1},\ t\geq 0$ by its mean curvature is given by the equation
\begin{equation} \label{eq:mcf_para}
\frac{\partial x}{\partial t}=\Vec{H}(x,t)
\end{equation}
where $x\in M(t)$, and $\Vec{H}(x,t)$ is the mean curvature vector at $(x,t)$. 
Equation \eqref{eq:mcf_para} is also the gradient flow of the surface area. In this sense, 
the mean curvature vector is the first variation of the surface area functional. 
This will be made more precise in the anisotropic case.
Mean curvature flow can be applied in various fields. It is considered in modeling the behavior of the interfaces in materials and the formation of microstructures in materials science, see \cite{Mu} and \cite{Ta}. It can also help enhancing features and removing noise from images in image processing and smoothing and simplifying complex meshes in computer graphics \cite{AGLM} and \cite{desbrun1999implicit}.

In this paper, we will concentrate on the case $M(t)$ is written as the graph of a function $u(\cdot ,t)$ over the $\mathbb{R}^n$-plane, i.e. 
$M(t) = \{ (x,u(x,t)) \in \mathbb{R}^n \times \mathbb{R} \}$. In this case, 
the unit upward normal for the graph of $u$ and its scalar mean curvature $H$ are given by
\begin{equation}
    \nu=\frac{(-Du,1)}{\sqrt{1+|Du|^2}}\quad\text{and}\quad
    H = \frac{1}{\sqrt{1+|Du|^2}} \left( \delta_{ij} -\frac{u_{x_i} u_{x_j}}{1+|Du|^2} \right) u_{x_i x_j}.
\end{equation}
where we use the Einstein convention of summing over repeated indices 
$1\leq i,j\leq n$.
As the normal velocity of the graph is given by $\frac{u_t}{\sqrt{1+|Du|^2}}$,  the evolution equation \eqref{eq:mcf_para} can be written in the following (non-parametric) form:
\begin{equation} \label{eq:mcf_nonpara}
    u_t =  \left( \delta_{ij} -\frac{u_{x_i} u_{x_j}}{1+|Du|^2} \right) u_{x_i x_j}.
\end{equation}

In \cite{EH1}, Ecker and Huisken proved that under Lipschitz initial data with linear growth, equation \eqref{eq:mcf_nonpara} has a smooth solution for all times. In \cite{EH2}, they relaxed the initial data to be just locally Lipschitz continuous. In \cite{Hu1}, Huisken considered \eqref{eq:mcf_nonpara} over a bounded domain $\Omega\subseteq\mathbb{R}^n$ with prescribed contact angle condition at the boundary:
\begin{equation} \label{eq:iso_ang_pro}
    \left\{ \begin{array}{cllcl}
            u_t & = & \left( \delta_{ij} -\frac{u_{x_i} u_{x_j}}{1+|Du|^2} \right) u_{x_i x_j}   \quad &  \text{in} & \ Q_T = \Omega \times [0,T], \\
            D_N u & = & - \sqrt{1+|Du|^2} \cos \theta  \quad &  \text{on} & \  \Gamma_T = \partial\Omega \times [0,T],  \\
         u(\cdot,0) & = & u_0(\cdot)  \quad \ & \text{in} & \ \  \Omega_0 = \Omega \times \{0\},
    \end{array}  \right.
\end{equation}
where the contact angle $\theta$ is described as the angle between the unit upward normal $\nu$ to the graph of $u$ and the unit inner normal $N$ to $\partial\Omega$ in 
the $\mathbb{R}^{n}$-plane, or equivalently the angle between the tangent plane of $u$ and the vertical direction -- see Figure 1, left. In \cite{Hu1}, Huisken only considered the case $\theta=\frac{\pi}{2}$, which reduces \eqref{eq:iso_ang_pro} to a homogeneous Neumann problem. He proved that the solution to \eqref{eq:iso_ang_pro} asymptotically converges to a constant function. In \cite{AW1}, Altschuler and Wu considered \eqref{eq:iso_ang_pro} with $n=1$ and a constant $\theta$ between $0$ and $\pi$. They proved that the solution converges to a translating solution.  This result also works for a class of quasilinear equations. Later, in \cite{AW2}, they proved the same convergence result over a strictly convex domain in $\mathbb{R}^{2}$ under the assumption that the derivative of $\theta$ along the boundary is bounded by the curvature of $\partial\Omega$. In physical settings, $\theta$ is the angle between the liquid (graph of $u$) and solid phases at their interface (see Figure 1, right) and hence describes the scenario that a liquid intersects with a solid phase -- see \cite{finn1986equilibrium} for physical background. 

\begin{figure}[h]
\centering
\begin{minipage}{0.35\linewidth}
\includegraphics[width=1\linewidth]{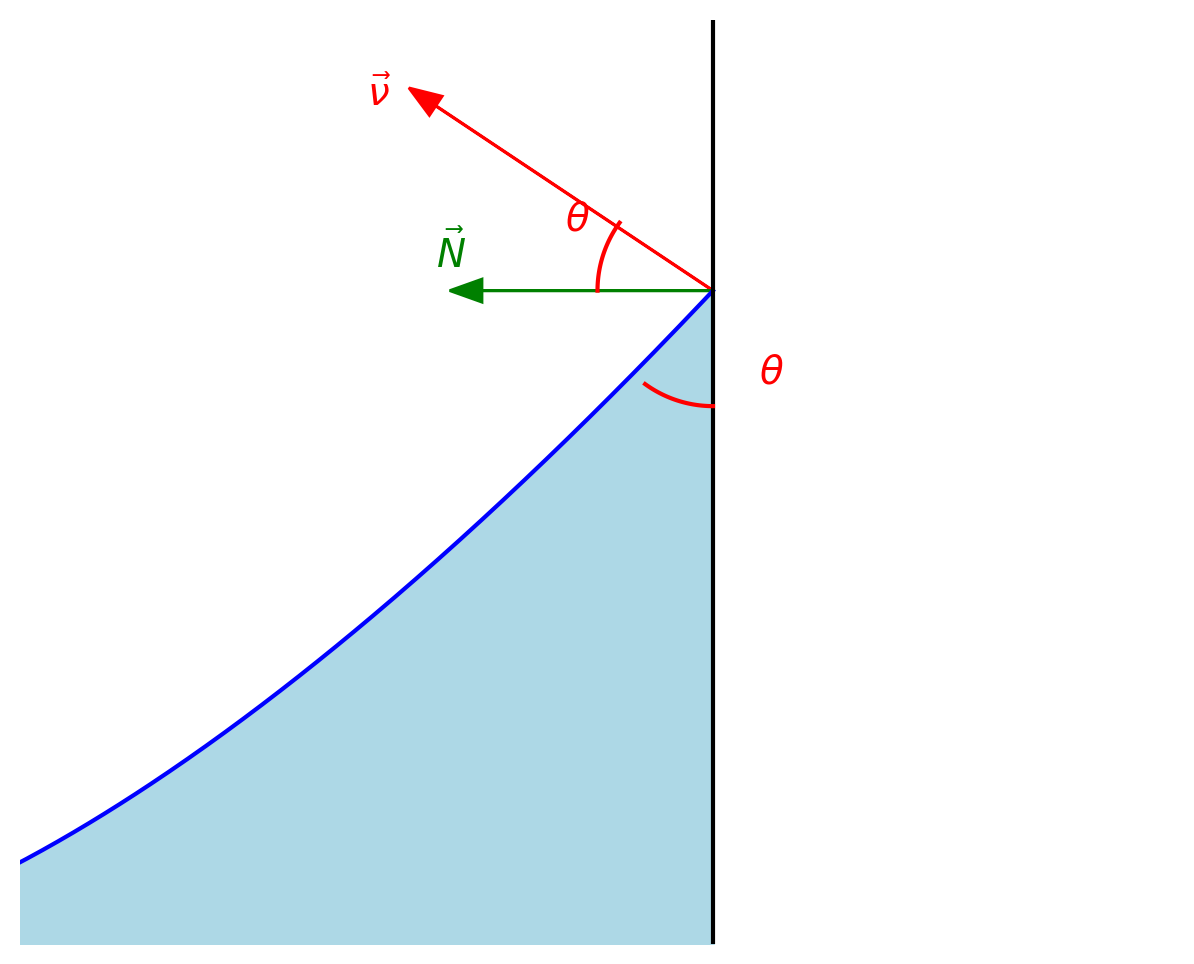}\\
\end{minipage}\hspace{20pt}
\begin{minipage}{0.45\linewidth}
\includegraphics[width=0.8\linewidth]{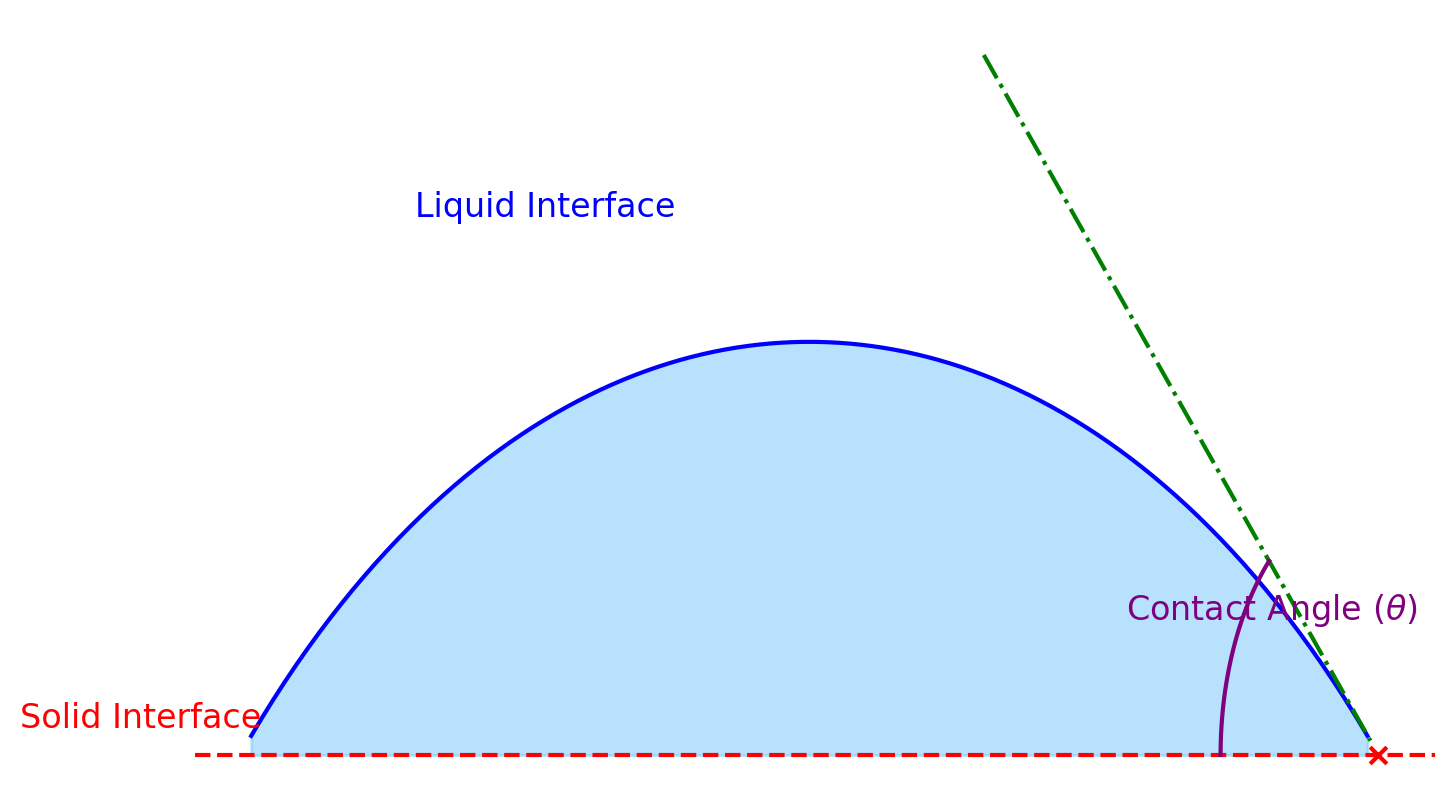}\\
\end{minipage}\hfill
\caption{  Left: contact angle $\theta$, Right: liquid contact with solid.}
\end{figure}

In this paper, we are interested in extending \eqref{eq:iso_ang_pro} to the anisotropic case. Compared to the isotropic equation, anisotropic mean curvature flow has several applications where directional dependence plays a crucial role. In crystalline materials, the energy associated with the surface often varies by direction due to atomic arrangement. Anisotropic mean curvature flow can model shape evolution of crystal surfaces more accurately than isotropic mean curvature flow by accounting for this directional dependency. Isotropic flow would tend to smooth surfaces uniformly, but anisotropic flow can capture faceted structures where certain directions are favored, leading to realistic simulations of crystal shapes with sharp edges and specific orientations.  We refer to \cite{taylor1, Ta} for further information on 
the use of mean curvature flows in the context of materials science.

Now we introduce the concept of anisotropy. Consider a positive function $F$ defined on the unit sphere $\mathbb{S}^{n}$ 
which is extended to $\mathbb{R}^{n+1}$ by homogeneity of degree one:
\[
F(\lambda\hat{p}) = \lambda F(\hat{p}),\quad\text{for any $\lambda > 0$ and $\hat{p}\in\mathbb{S}^n$}.
\]
Then the anisotropic analogue of equation \eqref{eq:mcf_nonpara} is given by
\begin{equation} \label{eq:aniso_mcf}
    u_t = \sqrt{1+|Du|^2} D^2_{p_i p_j} F (Du,-1) u_{x_i x_j}.
\end{equation}
(Note that in the above, $D^2_{p_ip_j}F$ refers to the second derivatives of $F$ with respect to the first $n$ variables with $1\leq i,j\leq n$.)
In order for the above equation to be parabolic, $F$ is required to be convex on $\mathbb{R}^{n+1}$.
We remark that equation \eqref{eq:mcf_nonpara} can in fact be put into the same form as \eqref{eq:aniso_mcf} if $F$ is 
simply given by $\overline{F}(p) = |p|$. Similar to the isotropic mean curvature flow \eqref{eq:mcf_nonpara}, \eqref{eq:aniso_mcf} arises as a gradient flow for the following surface energy functional: 
 \begin{equation} \label{eq:sur_energy}
    E(u) = \int F(-\nu)\sqrt{1+|Du|^2} dx
    \,\,\left(=
    \int F(Du, -1)dx
    \right).
\end{equation}
With this, we compute the variation of $E$ with respect to $u$:
\begin{eqnarray*}
\left.\frac{d}{dt}E(u+s\varphi)\right|_{s=0}
= \left.\frac{d}{ds}\int F(Du + sD\varphi, -1)dx\right|_{s=0}
= \int -D^2_{p_ip_j}F(Du,-1)u_{x_ix_j}\varphi\,dx.
\end{eqnarray*}
From the above, we define the first variation of $E$ as 
\[
\partial E = D^2_{p_i p_j} F (Du,-1) u_{x_i x_j}
\]
which is naturally called the anisotropic mean curvature.
In fact, in this paper, we can consider the following more general version of \eqref{eq:aniso_mcf}
\begin{equation} \label{eq:ansio_mob_mcf}
    u_t = \sqrt{1+|D u|^2}\,G(-\nu)D^2_{p_i p_j} F (Du,-1) u_{x_i x_j},
\end{equation}
by incorporating a positive mobility function $G(\cdot):\mathbb{S}^n\longrightarrow\mathbb{R}_+$. (We also extend $G$ to be a homogeneous degree one function defined on $\mathbb{R}^{n+1}$: $G(\lambda \hat{p}) = \lambda G(\hat{p})$
for any $\lambda \geq 0$ and $\hat{p}\in \mathbb{S}^{n}$.)

Here we mentioned some relevant results for anisotropic mean curvature flows. Given a convex, homogeneous function $F$ of degree one, in \cite{An1}, Andrews proved that the solution to the volume-preserving anisotropic mean curvature flow stays smooth and convex for all times, and converges to the Wulff shape corresponding to $F$ as $t\longrightarrow\infty$. Furthermore, in \cite{Cl}, under some assumption guaranteeing that $F$ does not deviate too much from the isotropic function $\overline{F}$, Clutterbuck proved an interior gradient estimate. Recently, in \cite{CKN}, Cesaroni, Kr{\"o}ner and Novaga construct soliton solutions as an entire graph assuming cylindrical anisotropies and mobilities -- see equation (4.1) in their work. 

Following the works \cite{Hu1}, \cite{AW1} and \cite{AW2}, in this paper, we study the contact angle boundary value problem:
\begin{equation} \label{eq:ang_pro}
    \left\{ \begin{array}{cllcl}
            u_t  &=& G(Du,-1) \  D^2_{p_i p_j} F (Du,-1) u_{x_i x_j}   \quad &  \text{in} & \ Q_T = \Omega \times [0,T], \\
            D_N u &=& - \sqrt{1+|Du|^2} \cos \theta  \quad &  \text{on} & \ \Gamma_T = \partial \Omega \times [0,T],  \\
         u(\cdot,0) &=& u_0(\cdot)  \quad \ & \text{in} & \ \  \Omega_0 = \Omega \times \{0\},
    \end{array}  \right.
\end{equation}
where $\Omega$ is a bounded and strictly convex domain in $\mathbb{R}^{2}$. 
Hence we generalize the result in \cite{AW2} to \eqref{eq:ang_pro}. In Section 2, we will present proper conditions  on the anisotropic function $F$ and its derivatives to guarantee that it does not deviate too much from the isotropic function $|p|$. 

We introduce our main results in Section 3. We establish a priori gradient estimate to the solution to \eqref{eq:ang_pro}, which implies uniform ellipticity of the operator. Existence and uniqueness of the solution then follow from the standard theory of quasilinear parabolic equations. The long time behavior of the solution to \eqref{eq:ang_pro} is related to the solution of the following elliptic problem: 
\begin{equation} \label{eq:ell_ang_pro}
\left\{ \begin{array}{cllcl}
     \lambda & = & G (Dw,-1) \ D^2_{p_i p_j} F (Dw,-1) w_{x_i x_j}   \quad & \text{in} & \ \Omega,  \\
     D_N w & = & - \sqrt{1+|Dw|^2} \cos \theta \quad & \text{on} & \ \partial \Omega. 
\end{array} \right.
\end{equation}
where the solution is the pair $(u,\lambda)$ with a constant $\lambda$ to be determined.
It turns out that the techniques to establish gradient estimate to \eqref{eq:ang_pro} is also applicable to \eqref{eq:ell_ang_pro}. This gradient estimate then implies the existence of $\lambda$. Uniqueness of $\lambda$ is derived from the boundary condition. As a result, the function $w$ leads to the translating solution $\Tilde{w}(x,t) = w(x) + \lambda t$ to \eqref{eq:ang_pro} with speed $\lambda$. Finally, an application of the strong maximum principle implies that solutions to \eqref{eq:ang_pro} converges to $\Tilde{w}$ (up to a constant).

Unfortunately this technique does not apply when $n>2$. This is due to the presence of mixed second order tangential derivatives along the boundary in higher dimensions, as we will remark in Section 3. Nevertheless, it can be applied to Dirichlet boundary problem in arbitrary dimension $n$. In this case, as shown in Section 4, the convexity condition of the domain boundary can be reduced to weighted mean convexity which is related to the anisotropic function. 
This condition, regarded as a replacement of the boundary curvature condition considered in \cite[Section 9]{Se} and \cite[Section 14.3]{GT}, is compatible with the results from the theory of quasilinear parabolic equations. Similar to Section 3, 
we also prove results for the corresponding elliptic version.

In recent years, the results in \cite{AW2} about isotropic mean curvature flows were generalized to arbitrary dimension $n$ -- see \cite{MWW} for Neumann boundary condition and \cite{GMWW} for contact angle condition. In an upcoming work \cite{CY2}, we are also able to 
modify their techniques to the anisotropic case.

\section{Preliminary Information}\label{sec:prelim}
Here we provide some preliminary computation useful for the rest of this paper.

First we give formulas for the derivatives of the function $|p|$ and our anisotropic function $F$ which is assumed to be convex and homogeneous of degree one. Note that
\begin{equation}
D_{p_i} |p|=\frac{p_i}{|p|},\quad\text{and}\quad
D^2_{p_i p_j} |p|=\frac{1}{|p|}(\delta_{ij}-\frac{p_i p_j}{|p|^2}).
\end{equation}
Upon introducing $\xi=\xi(p) := \frac{p}{|p|}$, we have
\begin{eqnarray}
F(p)&=&|p|F(\xi),\\
D_{p_i} F(p)&=&\frac{p_i}{|p|} F(\xi) + |p| D_{p_i} F(\xi),\\
D^2_{p_i p_j} F(p)&=&\frac{1}{|p|}(\delta_{ij}-\frac{p_i p_j}{|p|^2}) F(\xi) + \frac{p_i}{|p|}  D_{p_j} F(\xi) + \frac{p_j}{|p|}  D_{p_i} F(\xi) + |p| D^2_{p_i p_j} F(\xi).
\end{eqnarray}
Note that in the above, the indices $i,j$ range from $1$ to $n+1$.

The following assumptions are given to quantify the deviations of the mobility and anisotropic functions 
$G$ and $F$ from their isotropic versions, $\overline{G}(\xi)\equiv 1$ and $\overline{F}(\xi) \equiv 1$:
\begin{enumerate}
\item For $G(\xi)$, there exist positive constants $g_0$ and $G_0$, such that
\begin{equation} \label{eq:cond_mob}
    g_0 \leq G(\xi) \leq G_0.
\end{equation}
\item For $F(\xi)$, there exist positive constants $m_0, \ M_0, \ m_1, \ m_2$, such that for any $1\leq i,j\leq n$, 
\begin{equation} \label{eq:cond_aniso}
\begin{split}
    & m_0  \leq F(\xi) \leq M_0, \\
    & |D_{p_i} F(\xi)|  < \frac{m_1}{|p|}, \\
    & \left\| \left[ D^2_{p_i p_j} F(\xi) \right]\right\|  < \frac{m_2}{|p|^2},
\end{split}
\end{equation}
\end{enumerate}
where in the above, $\|M\|=\sqrt{\text{tr}(M^TM)}$ refers to the Frobenious norm of a matrix.
In Sections 3 and 4, we will give further conditions on these constants. 
As a reference, note that in the case of isotropic function $\overline{F}(p) = |p|$, we have $m_0=M_0=1$ and $m_1 =m_2=0$.

For convenience, we introduce the following notations which are used frequently later:
\begin{eqnarray*}
    v&=&\sqrt{1+|Du|^2},\\
    A \,(=A(Du)) = \big(a^{ij}\big),\,\,\,a^{ij}&=&G(Du,-1) \ D^2_{p_i p_j} F(Du,-1),\\
    \triangle_A&=&a^{ij}D_{x_i} D_{x_j}.
\end{eqnarray*}
Note that as $F$ is convex, the matrix $\big(a^{ij}\big)$ is semi-positive definite. 

Next, we introduce some formulas related to the geometry of the boundary $\partial\Omega$. We mainly follow the
formulas in \cite{AW2}. Let $e_j$ ($1\leq j \leq n$) be the coordinate vectors of $\mathbb{R}^n$.
Then for any vector fields $V = V^ie_i$ and $W = W^ie_i$ defined on $\mathbb{R}^n$, we have
\begin{eqnarray*}
D_VW & = & V^j D_j W^i e_i,\\
    D_V D_W u &=& V^i W^j D^2_{x_i x_j} u + \langle D_V W,Du \rangle.
\end{eqnarray*}
 We further introduce the following notations.
\begin{eqnarray*}
    \langle V,W \rangle_A &=& a^{ij} V^i W^j;\quad \langle V,W \rangle_{\delta} = \delta^{ij} V^i W^j = V^iW^i,
\end{eqnarray*}
and for matrices $P = [p_{ij}]_{n\times n}, \ Q = [q_{ij}]_{n\times n}$, 
\begin{eqnarray*}
    \langle P,Q \rangle_{A,\delta} &=& a^{ij} \delta^{kl} p_{ik} q_{jl},
\end{eqnarray*}
where $\delta^{kl}$ denotes the delta function, $\delta^{kl} = 1$ if $k=l$ and $0$ otherwise.

In order to prove our gradient estimate, we need to differentiate along the boundary $\partial\Omega$. 
To do this, we will extend using parallel transport of the unit tangent $T$ and (inward) normal $N$ vectors of $\partial\Omega$ to a neighborhood of $\partial\Omega$ (along the normal direction). This leads to the following formula.
\begin{lemma}[\cite{AW2}]\label{lem:bd_grad_cal} Let $k$ be the curvature of $\partial\Omega$. Then on $\partial \Omega$, we have:
\begin{enumerate}
\item For $f\in C^{\infty}(\bar{\Omega})$, the mixed derivatives of $f$ is related by
\begin{equation*}
D_N D_T f=D_T D_N f+k D_T f.
\end{equation*}

\item $D_T T=k N;\ D_T N=-k T;\ D_N T=D_N N=0 $.
\item Using the boundary condition in \eqref{eq:ang_pro}, we have 
$$ |D_N u|^2 = \cot^2 \theta ( 1+|D_T u|^2), $$
$$ |D_T u|^2 = \sin^2 \theta ( 1+|Du|^2 ) - 1. $$
\item\label{bdry2diff} For the second derivations, we have
\begin{eqnarray*}
D_N u_t &=&-\cos{\theta} \frac{Du \cdot Du_t}{\sqrt{1+|Du|^2}},\\
D_T D_N u  &=& \sin{\theta}(D_T \theta)v-\cos{\theta}(D_T v),\\
D_N D_T u &=& \sin{\theta}(D_T \theta)v-\cos{\theta}(D_T v)+k D_T u,\\
D_T D_T u&=&\frac{ \sin{\theta} \cos{\theta} (D_T \theta)v^2+\sin ^2\theta \ vD_T v }{D_T u}.
\end{eqnarray*}
\end{enumerate}
\end{lemma}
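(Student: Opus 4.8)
The plan is to establish the four items in the order (2), (1), (3), (4), since each feeds into the next, all computations taking place on $\partial\Omega$. I first fix the extension of the frame: parametrize $\partial\Omega$ by arclength $s$ and extend $T$ and $N$ off the boundary by declaring them constant along each inward normal line, i.e. $N(x)=N(\pi(x))$ and $T(x)=T(\pi(x))$ with $\pi$ the nearest-point projection onto $\partial\Omega$, which is well defined on a tubular neighborhood since $\Omega$ is bounded, strictly convex with smooth boundary. Constancy along the normal lines gives $D_NN=D_NT=0$ at once. Restricting to $\partial\Omega$, the operator $D_T$ becomes $\tfrac{d}{ds}$, and the planar Frenet equations — with $k$ measured against the inward normal $N$, so that $k>0$ on a convex boundary — read $\tfrac{dT}{ds}=kN$ and $\tfrac{dN}{ds}=-kT$. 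This is exactly item (2).

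For item (1) I would use the commutator identity $D_ND_Tf-D_TD_Nf=D_{[N,T]}f$ for $f\in C^{\infty}(\bar{\Omega})$, where $[N,T]=D_NT-D_TN=0-(-kT)=kT$ by item (2); hence $D_ND_Tf=D_TD_Nf+kD_Tf$. For item (3), since $\{N,T\}$ is an orthonormal frame on $\partial\Omega$ we have $Du=(D_Nu)N+(D_Tu)T$ and $|Du|^2=|D_Nu|^2+|D_Tu|^2$; substituting the contact angle condition $D_Nu=-v\cos\theta$ with $v^2=1+|Du|^2$ and solving the resulting algebraic relation for $|D_Nu|^2$ gives $|D_Nu|^2\sin^2\theta=(1+|D_Tu|^2)\cos^2\theta$, and subtracting $|D_Nu|^2=v^2\cos^2\theta$ from $|Du|^2=v^2-1$ gives $|D_Tu|^2=v^2\sin^2\theta-1$.

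Item (4) then follows by differentiating the boundary condition. Differentiating $D_Nu=-v\cos\theta$ in $t$, using that $N$ and $\theta$ are $t$-independent and $v_t=(Du\cdot Du_t)/v$, gives the expression for $D_Nu_t$. Differentiating $D_Nu=-v\cos\theta$ along $T$, and noting that — by item (2) and the symmetry of $u_{x_ix_j}$ — the iterated scalar derivative $D_T(D_Nu)$ coincides with the second-order quantity $D_TD_Nu=T^iN^ju_{x_ix_j}+\langle D_TN,Du\rangle$, yields $D_TD_Nu=\sin\theta(D_T\theta)v-\cos\theta(D_Tv)$; item (1) with $f=u$ then gives $D_ND_Tu=\sin\theta(D_T\theta)v-\cos\theta(D_Tv)+kD_Tu$. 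Finally, differentiating the identity $|D_Tu|^2=v^2\sin^2\theta-1$ from item (3) along $T$ and dividing by $2D_Tu$ produces the stated formula for $D_TD_Tu$ (again using that the scalar iterate $D_T(D_Tu)$ equals the second-order $D_TD_Tu$).

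The computations are routine; the points requiring care are fixing the curvature sign convention (inward versus outward $N$) consistently so that the signs in items (1) and (2) are correct, verifying that the parallel-transport extension genuinely gives $D_NN=D_NT=0$ — which is what makes the commutator in (1) collapse to $kD_Tf$ — and observing that the last formula of (4) is only meaningful at boundary points where $D_Tu\neq 0$; where $D_Tu=0$ one reads the constraint $v^2\sin^2\theta=1$ and its consequences off item (3) directly. I would flag these rather than belabor them.
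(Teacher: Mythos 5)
The paper states Lemma~\ref{lem:bd_grad_cal} as a citation to \cite{AW2} and gives no proof of its own; it only supplies the framework (parallel transport of $T,N$ along normal lines and the formula for $D_VD_Wu$). Your argument is correct and uses exactly that framework: item (2) from constancy along normal lines plus the planar Frenet equations, item (1) from the torsion-free commutator with $[N,T]=kT$, item (3) from substituting the contact-angle condition into $|Du|^2=|D_Nu|^2+|D_Tu|^2$, and item (4) by differentiating the boundary relation and item (3) along $T$ and in $t$, together with item (1) — so this is a faithful reconstruction of the cited computation rather than a different route.
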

From item \ref{bdry2diff} above,
all the second order derivatives of $u$ along the boundary $\partial\Omega$ are given except $D_N D_N u$, 
which can only be found using the governing equation \eqref{eq:ansio_mob_mcf}. This is a key step in deriving our gradient estimate.

\section{Main Results with Contact Angle Boundary in $\mathbb{R}^2$ }
\label{sec:ang_grad_est}
In this section, we prove the crucial gradient estimate for \eqref{eq:ang_pro}.
This will imply the long time existence of solution. 

First we state the main assumptions for our results.

\begin{enumerate}
\item[A1.] The domain $\Omega\subset\mathbb{R}^2$ is assumed to be bounded, of class
$C^3$, and uniformly convex in the sense that there are positive constants $k_0, k_1$
such that $k_0 \leq k(x) \leq k_1$ for any $x\in\partial\Omega$.

\item[A2.] For the boundary contact angle $\theta$, there are positive constant 
$\delta_0, \theta_0$ such that 
\[
|D_T\theta(x)|\leq k(x) -\delta_0\quad\text{for any $x\in\partial\Omega$, and}\quad
\theta_0 \leq \theta \leq \pi- \theta_0.
\]
\item[A3.] For the anisotropic function $F$, the constants $m_0$ and $m_2$
from \eqref{eq:cond_aniso} satisfies $m_2 < m_0$.

\item[A4.] The initial condition $u_0$ is assumed to be $C^3(\overline{\Omega})$ and satisfies $D_N u_0 = - \sqrt{1+|Du_0|^2} \cos \theta$ on $\partial \Omega$.
\end{enumerate}
We will assume A1--A4 for the rest of this Section.

\begin{theorem}\label{thm:ang_grad_est}
There exists a positive constant $C_1$, independent of time such that for any solution $u$ to \eqref{eq:ang_pro}, we have,
\begin{equation}\label{eq:ang_grad_est}
    \sup_{Q_T} |Du| \leq C_1.
\end{equation}
\end{theorem}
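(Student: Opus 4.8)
The plan is to establish the gradient bound via a maximum principle argument applied to an auxiliary function that combines $v = \sqrt{1+|Du|^2}$ with a term controlling the behavior near $\partial\Omega$, following the strategy of \cite{AW2} but now accounting for the anisotropy and mobility. First I would compute the evolution equation for $v$ (or equivalently $|Du|^2$) under \eqref{eq:ang_pro}. Differentiating the PDE and using the convexity of $F$, one expects an inequality of the schematic form $v_t \leq \triangle_A v - (\text{good gradient term}) + (\text{lower order})$, where the anisotropic second-derivative bounds from \eqref{eq:cond_aniso} (in particular the hypothesis $m_2 < m_0$ in A3) are used to guarantee that the operator $\triangle_A$ remains comparable to the isotropic one and that the dangerous cross terms coming from $D_{p_i}F$, $D^2_{p_ip_j}F$ are absorbed. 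Since $\Omega$ is bounded and the coefficients are continuous in $Du$, the only way the supremum of $v$ over $Q_T$ can fail to be controlled by its initial value is at an interior spatial maximum (handled by the maximum principle on the evolution inequality) or on the lateral boundary $\Gamma_T$.

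The heart of the argument is therefore the boundary analysis on $\Gamma_T$. Here I would use Lemma \ref{lem:bd_grad_cal}: items (2)--(4) express all tangential and mixed second derivatives of $u$ on $\partial\Omega$ in terms of $v$, $D_Tv$, $\theta$, $D_T\theta$, and $k$, while $D_ND_Nu$ must be recovered from the governing equation \eqref{eq:ansio_mob_mcf}. The strategy is to consider, near $\partial\Omega$, a test function such as $\phi = v + g(d)$ or $\phi = v\,e^{\alpha d}$ (with $d$ the distance to $\partial\Omega$ and $\alpha$, $g$ chosen appropriately), show it cannot attain a new maximum on $\Gamma_T$ by computing $D_N\phi$ there. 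Using Hopf's lemma, a maximum on the boundary would force $D_N\phi \leq 0$; but plugging in the boundary formulas for $D_NV$, $D_ND_Tu$, $D_TD_Tu$ and using A2 — specifically $|D_T\theta| \le k - \delta_0$ and $\theta_0 \le \theta \le \pi-\theta_0$ — should yield $D_N\phi > 0$ for $|Du|$ large, a contradiction. The uniform convexity $k_0 \le k \le k_1$ from A1 is what makes the curvature term dominate; the slack $\delta_0$ is exactly what is needed to overcome the $D_T\theta$ contribution with room to spare. The constant $C_1$ produced this way depends only on $k_0,k_1,\delta_0,\theta_0$, the anisotropy/mobility constants $g_0,G_0,m_0,M_0,m_1,m_2$, the geometry of $\Omega$, and $\sup_\Omega|Du_0|$ — crucially not on $T$.

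The main obstacle I anticipate is the boundary computation: after substituting the expressions from Lemma \ref{lem:bd_grad_cal}\eqref{bdry2diff} into $D_N\phi$, one gets a delicate combination of terms with competing signs, and it must be checked that the anisotropic coefficients $a^{ij}$ evaluated in the normal–tangential frame on $\partial\Omega$ still let the $k$-term win. In particular $D_ND_Nu$ enters through the PDE and carries the full anisotropic operator, so one needs the bounds in \eqref{eq:cond_aniso} together with $m_2 < m_0$ to ensure the coefficient of $D_ND_Nu$ has a definite sign and the resulting inequality closes. A secondary technical point is the interior estimate: the extra terms in $\triangle_A v$ coming from differentiating $G(Du,-1)D^2_{p_ip_j}F(Du,-1)$ in $x$ produce first-order-in-$Du$-times-$D^2u$ contributions that must be controlled by the good negative term; again the smallness encoded in $m_1, m_2$ relative to $m_0$ is what permits this absorption. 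Once the evolution inequality and the boundary sign condition are both in hand, a standard parabolic maximum principle on $Q_T$ (comparing $\phi$ at time $t$ with its value on $\Omega_0$) finishes the proof.
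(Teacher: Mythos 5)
Your high-level outline (reduce to the parabolic boundary, then analyze the boundary maximum of $v$ using Lemma~\ref{lem:bd_grad_cal} and the PDE to recover $D_ND_Nu$) matches the spirit of the paper, but there are two substantial gaps.

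First, you never establish a time-uniform bound on $|u_t|$, which the paper proves as a separate preliminary step (Lemma~\ref{lem:ang_ut_est}): $\sup_{Q_T}|u_t| = \sup_{\Omega_0}|u_t|$. This is essential. When $D_ND_Nu$ is recovered from the equation and substituted into the boundary inequality $D_N|Du|^2\le 0$, the resulting estimate for $v$ contains $u_t$ on the right-hand side (see \eqref{eq:ang_ineq4}--\eqref{eq:ang_ineq5}). Without an a~priori, $T$-independent bound on $u_t$, the constant $C_1$ you would extract depends on $T$, and the theorem is not proved as stated. The $u_t$ bound itself is nontrivial: it requires showing, via the boundary condition and Hopf's lemma, that a max of $|u_t|$ on $\Gamma_T$ is impossible.

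Second, your barrier proposal ($\phi=v+g(d)$ or $\phi=ve^{\alpha d}$ plus Hopf) is genuinely a different route from the paper and, as sketched, does not close. The paper evaluates directly at a boundary maximum of $|Du|^2$: there $D_T|Du|^2=0$ kills $D_Tv$, which dramatically simplifies the expressions from Lemma~\ref{lem:bd_grad_cal}(\ref{bdry2diff}); then $D_ND_Nu$ is solved from the equation decomposed in the $(T,N)$ frame; and the resulting inequality is regrouped by the order of $F$-derivatives into three pieces $J_1,J_2,J_3$. The crucial structural fact you do not identify is that the first-derivative piece $J_2$ (involving $D_pF$) vanishes identically because of the contact angle relation $D_Nu=-v\cos\theta$, while $J_1$ produces the coercive term $(\sin\theta\,D_T\theta\,\tfrac{v}{D_Tu}+k)(v^2-1)FG$ and $J_3$ is absorbed using $m_2<m_0$. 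It is only this exact cancellation and grouping that makes A2 and A3 sufficient; a generic barrier $\phi=v+g(d)$ would not manifestly expose it. You also omit the needed case-split: if $|D_Tu|<1$ the bound is immediate from Lemma~\ref{lem:bd_grad_cal}(3) and A2, and only for $|D_Tu|\ge 1$ is the computation above carried out (and only then is the $\tfrac{v}{D_Tu}$ factor harmless).
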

Before proving the above theorem, similar to \cite{AW2},
we first establish an a priori bound on the time derivative of $u$.
\begin{lemma}\label{lem:ang_ut_est}
The following holds for $u_t$ 
\begin{equation*}
    \sup_{Q_T} |u_t| = \sup_{\Omega_0}|u_t| =: C_2.
\end{equation*}
\end{lemma}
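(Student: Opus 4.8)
The plan is to apply the maximum principle to the equation satisfied by $w := u_t$. Differentiating the PDE in \eqref{eq:ang_pro} with respect to $t$, and using that $a^{ij} = a^{ij}(Du)$ depends on $t$ only through $Du$, we obtain a linear parabolic equation of the form
\[
w_t = a^{ij}(Du)\, w_{x_i x_j} + b^k(Du, D^2u)\, w_{x_k},
\]
where $b^k = \frac{\partial a^{ij}}{\partial p_k}(Du)\, u_{x_i x_j}$ is the coefficient coming from the chain rule. Since $F$ is convex, $(a^{ij})$ is semi-positive definite, so this operator satisfies the (weak) parabolicity needed for the maximum principle; the lower-order coefficients $b^k$ are bounded on any set where $u$ is a smooth solution, which is all we need on the closed cylinder $\overline{Q_T}$. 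Thus a positive interior maximum (or negative interior minimum) of $w$ over $\overline{Q_T}$ can only be attained on the parabolic boundary $\overline{\Omega_0} \cup \Gamma_T$.

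Next I would rule out the spatial boundary $\Gamma_T$ using Hopf's lemma together with the boundary condition. Differentiating the contact angle condition $D_N u = -\sqrt{1+|Du|^2}\cos\theta$ in $t$ gives, as recorded in Lemma \ref{lem:bd_grad_cal}\eqref{bdry2diff},
\[
D_N u_t = -\cos\theta\, \frac{Du\cdot Du_t}{\sqrt{1+|Du|^2}} = -\frac{\cos\theta}{v}\, Du \cdot Dw.
\]
At a boundary point $x_0\in\partial\Omega$ where $w$ attains a positive maximum over $\overline\Omega$ at some fixed time, we have $Dw(x_0) = (D_N w)(x_0)\,N$ since the tangential derivative vanishes, so the right-hand side becomes $-\frac{\cos\theta}{v}(D_N u)(D_N w) = \cos^2\theta\, (D_N w)$ (using $D_N u = -v\cos\theta$). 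Hence $D_N w (1 - \cos^2\theta) = 0$, and since $\sin^2\theta \geq \sin^2\theta_0 > 0$ by assumption A2, this forces $D_N w(x_0) = 0$. But Hopf's lemma at a strict boundary maximum would require $D_N w(x_0) < 0$ (recall $N$ is the inner normal), a contradiction — unless $w$ is constant in space at that time, in which case the maximum is also attained at an interior point and we are back to the interior case. The same argument applied to $-w$ handles the minimum. Consequently $\sup_{Q_T}|u_t|$ is attained on $\Omega_0$, i.e. at $t=0$, giving the claimed identity with $C_2 := \sup_{\Omega_0}|u_t|$, where $u_t(\cdot,0)$ is determined from the initial data $u_0$ through the PDE.

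The main obstacle is the degeneracy of the operator: because $F$ need only be convex (not uniformly elliptic a priori — uniform ellipticity is exactly what Theorem \ref{thm:ang_grad_est} will eventually deliver), $(a^{ij})$ may fail to be strictly positive definite, so one must be careful that the maximum principle and Hopf's lemma still apply. This is handled by noting that for a \emph{given} smooth solution $u$ on $\overline{Q_T}$, the gradient $|Du|$ is bounded on the compact cylinder, hence $(a^{ij}(Du))$ is continuous and its eigenvalues lie in a fixed range; the weak maximum principle holds for any such (possibly degenerate) operator with bounded coefficients, and at a boundary extremum one can still run the Hopf argument after a standard barrier/perturbation, or simply invoke the version of Hopf's lemma valid for operators that are uniformly elliptic in a neighborhood of the extremal point (which holds here since $|Du|$ is bounded and $D^2F$ is continuous and positive on the relevant compact set of directions, as quantified by \eqref{eq:cond_aniso} and A3). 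A secondary subtlety is the regularity needed to differentiate the equation in $t$ — this is legitimate since we are arguing a priori for a solution already assumed smooth, as in \cite{AW2}.
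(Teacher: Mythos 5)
Your argument is essentially the paper's: the weak maximum principle pushes the extremum of $u_t$ to the parabolic boundary, and the differentiated contact-angle condition forces $D_N u_t = 0$ at a lateral-boundary extremum, contradicting Hopf's lemma. The only cosmetic difference is that you work with $u_t$ directly (treating maximum and minimum separately), while the paper applies the same reasoning to $|u_t|^2$, for which the identity $(\partial_t + L)|u_t|^2 = -2\langle Du_t, Du_t\rangle_A \leq 0$ delivers the subsolution property in one step.
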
 
\begin{proof}
From \eqref{eq:ang_pro} we compute 
\begin{equation} \label{eq:ut_deri}
\begin{split}
    \frac{\partial}{\partial t} |u_t|^2 & = \sum_{i,j} (|u_t|^2)_{x_i x_j} - 2\sum_{i,j} a^{ij} u_{t,x_i} u_{t,x_j} + \sum_{i,j,k} u_{x_i x_j} \frac{\partial a^{ij}}{\partial p^k} (|u_t|^2)_{x_k} \\
    & = \triangle_A |u_t|^2 - 2 \langle Du_t,Du_t  \rangle_A + u_{x_i x_j} \langle D_p a^{ij},D|u_t|^2 \rangle_{\delta}.
\end{split}
\end{equation}
Hence, for the elliptic operator $L(\varphi) := - \triangle_A \varphi - u_{x_i x_j} \langle D_p a^{ij},D\varphi \rangle_{\delta}$, 
we have 
\begin{equation}\label{eq:ut_weak_max}
\frac{\partial}{\partial t} |u_t|^2 + L |u_t|^2 = -2 \langle Du_t,Du_t  \rangle_A \leq 0.
\end{equation}
Thus the weak maximum principle leads to 
$\sup_{Q_T} |u_t|^2=\sup_{\Gamma_T \cup \Omega_0} |u_t|^2 $.

Suppose the maximum of $|u_t|$ occurs at $(x_0,\tau)\in\Gamma_T$. 
Then $D_T u_t(x_0,\tau)=0$. Applying the boundary condition in \eqref{eq:ang_pro} at this point, we have
\begin{equation*}
    D_N u_t = -\cos{\theta} \ \frac{D_N u \ D_N u_t + D_T u \ D_T u_t}{v} = -\cos{\theta} \ \frac{D_N u \ D_N u_t }{v} = \cos^2{\theta} D_N u_t
\end{equation*}
so that $D_N u_t=0$. Now, if $L (|u_t|^2)\leq 0$, then Hopf Lemma gives $D_N u_t<0$ 
which is clearly a contradiction. Hence $L(|u_t|^2)$ must be strictly
positive. Then \eqref{eq:ut_weak_max} implies that
$\frac{\partial}{\partial t} |u_t|^2 < 0$ so that $(x_0, \tau)$ cannot be a
maximum point of $u_t$. Thus we conclude that
$\sup_{Q_T} |u_t| = \sup_{\Omega_0}|u_t|$.
\end{proof}   

Next, we prove that the maximum of $|Du|$ occurs on the parabolic boundary, $\Gamma_T \cup \Omega_0$. 
\begin{lemma} \label{lem:ang_Du_bd}
$\sup_{Q_T} |Du|^2=\sup_{\Gamma_T \cup \Omega_0} |Du|^2 $.
\end{lemma}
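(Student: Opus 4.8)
The plan is to apply the weak maximum principle to a suitable function built from $|Du|^2$, exactly as was done for $|u_t|^2$ in Lemma~\ref{lem:ang_ut_est}, and then rule out an interior-boundary maximum using the Hopf lemma together with the boundary relations from Lemma~\ref{lem:bd_grad_cal}. First I would differentiate the equation in \eqref{eq:ang_pro} with respect to $x_k$, multiply by $u_{x_k}$ and sum, to obtain a parabolic equation for $v^2 = 1+|Du|^2$ (equivalently $|Du|^2$) of the schematic form
\[
\frac{\partial}{\partial t}|Du|^2 = \triangle_A |Du|^2 - 2\langle D^2u, D^2u\rangle_{A,\delta} + u_{x_ix_j}\langle D_p a^{ij}, D|Du|^2\rangle_\delta,
\]
so that, with the same operator $L(\varphi) = -\triangle_A\varphi - u_{x_ix_j}\langle D_pa^{ij}, D\varphi\rangle_\delta$ as in the previous lemma, one has $\frac{\partial}{\partial t}|Du|^2 + L|Du|^2 = -2\langle D^2u,D^2u\rangle_{A,\delta}\leq 0$ since $A$ is semi-positive definite. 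The weak maximum principle then gives $\sup_{Q_T}|Du|^2 = \sup_{\Gamma_T\cup\Omega_0}|Du|^2$ unless the supremum is attained at an interior-lateral point, which is the case I must exclude.

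So suppose the maximum of $|Du|^2$ over $Q_T$ is attained at some $(x_0,\tau)\in\Gamma_T$ with $|Du|$ large (if it is bounded we are already done). At such a point the tangential derivative of $|Du|^2$ along $\partial\Omega$ vanishes, i.e. $D_T(|Du|^2) = 0$, which after expanding gives $D_Tu\,(D_TD_Tu) + D_Nu\,(D_TD_Nu) = 0$. Using the second-order boundary formulas in item~\ref{bdry2diff} of Lemma~\ref{lem:bd_grad_cal} (and item 3 relating $D_Nu$, $D_Tu$ and $\theta$), I would convert this into a relation that forces $D_N(|Du|^2)$ to have a definite sign: the key point, following \cite{AW2}, is that the curvature term $kD_Tu$ appearing in $D_ND_Tu$ versus $D_TD_Nu$, together with the hypothesis $|D_T\theta|\leq k-\delta_0$ from assumption A2 and the bounds $\theta_0\leq\theta\leq\pi-\theta_0$, produces a strictly negative contribution. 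That is, I expect to show $D_N(|Du|^2) \leq -c\,|Du|^2$ at $(x_0,\tau)$ for some $c>0$ once $|Du|$ is large enough (the anisotropy enters only through harmless bounded factors from \eqref{eq:cond_aniso} and \eqref{eq:cond_mob}, since the sign mechanism is geometric). On the other hand, if $L(|Du|^2)\leq 0$ near $(x_0,\tau)$ then the Hopf lemma forces $D_N(|Du|^2) > 0$ at the maximum, a contradiction; hence $L(|Du|^2) > 0$ there, and then the evolution identity gives $\frac{\partial}{\partial t}|Du|^2 < 0$ at $(x_0,\tau)$, so it cannot be a maximum in time either. This contradiction shows the supremum cannot be attained on $\Gamma_T$, completing the proof.

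The main obstacle is the boundary computation: unlike the $u_t$ case, where $D_Tu_t = 0$ immediately killed the troublesome term and left a clean $D_Nu_t = \cos^2\theta\,D_Nu_t$, here $D_T(|Du|^2)=0$ only gives a \emph{relation} among the second derivatives, and one must carefully substitute the expressions for $D_TD_Tu$, $D_ND_Tu$, $D_TD_Nu$ (and handle $D_ND_Nu$, which is only available through the PDE \eqref{eq:ansio_mob_mcf}) to extract the sign of $D_N(|Du|^2)$. Keeping track of how the anisotropic coefficients $a^{ij}$ interact with the $A$-inner products while showing the geometric hypotheses A1--A2 still dominate is the delicate part; I would organize it so that all anisotropy-dependent quantities are bounded above and below by the constants in \eqref{eq:cond_mob}--\eqref{eq:cond_aniso} and assumption A3, reducing the sign question to the same convexity/contact-angle inequality used in \cite{AW2}.
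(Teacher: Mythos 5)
Your first paragraph, up to and including the weak maximum principle conclusion, \emph{is} the paper's entire proof of this lemma, and it is correct: differentiate the PDE, obtain
\[
\frac{\partial}{\partial t}|Du|^2 + L|Du|^2 = -2\langle D^2u,D^2u\rangle_{A,\delta}\le 0,
\]
and the weak maximum principle gives $\sup_{Q_T}|Du|^2=\sup_{\Gamma_T\cup\Omega_0}|Du|^2$. That is precisely the statement of Lemma~\ref{lem:ang_Du_bd}, so you are finished at that point.

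The rest of the proposal reflects a misreading of what the lemma claims. You write that the weak maximum principle gives the desired identity \emph{unless} the supremum is attained at a lateral boundary point, which you then try to exclude --- but $\Gamma_T$ \emph{is} part of the parabolic boundary $\Gamma_T\cup\Omega_0$, so there is no ``unless'': the WMP conclusion is exactly $\sup_{Q_T}|Du|^2=\sup_{\Gamma_T\cup\Omega_0}|Du|^2$, lateral maxima included. You appear to be conflating this lemma with Lemma~\ref{lem:ang_ut_est}, whose conclusion is the genuinely stronger $\sup_{Q_T}|u_t|=\sup_{\Omega_0}|u_t|$; there the exclusion of $\Gamma_T$ works because $D_Tu_t=0$ plus the boundary relation forces $D_Nu_t=\cos^2\theta\,D_Nu_t$, i.e.\ $D_Nu_t=0$, yielding a clean Hopf contradiction. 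No analogous identity holds for $|Du|^2$, and in fact the paper does \emph{not} claim the gradient maximum avoids $\Gamma_T$ --- Theorem~\ref{thm:ang_grad_est} instead assumes the maximum is on $\Gamma_T$ and derives a \emph{quantitative bound} on $|Du|$ there, via exactly the substitution of $D_ND_Nu$ from the PDE into $D_N|Du|^2\le0$ that you sketch. So your ``I expect to show $D_N(|Du|^2)\le -c|Du|^2$ and obtain a contradiction'' step is not a gap in proving this lemma (which needs none of it), but as a route to the boundary estimate it also would not close as a contradiction argument: assumptions A1--A3 give a bound, not a sign incompatibility. Drop everything after the first paragraph and the proof matches the paper.
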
  
\begin{proof}
Similar to the calculation of above Lemma, we have from \eqref{eq:ang_pro} that,
\begin{equation}
\begin{split}
\frac{\partial}{\partial t} |Du|^2 
& = \sum_{i,j} a^{ij} (|Du|^2)_{x_i x_j} - 2 \sum_{i,j} a^{ij} \sum_k u_{x_k x_i} u_{x_k x_j} + \sum_{i,j,l} u_{x_i x_j} \frac{\partial a^{ij}}{\partial p^l} (|Du|^2)_{x_l} \\
& = \triangle_A |Du|^2 - 2 \langle D^2 u,D^2 u  \rangle_{A,\delta} + u_{x_i x_j} \langle D_p a^{ij},D|Du|^2 \rangle_{\delta}.
\end{split}
\end{equation}
Hence, we have
\[
\frac{\partial}{\partial t} |Du|^2 + L|Du|^2 = -2 \langle D^2 u,D^2 u  \rangle_{A,\delta} \leq 0.
\]
By the weak maximum principle again, we obtain $ \sup_{Q_T} |Du|^2=\sup_{\Gamma_T \cup \Omega_0} |Du|^2 $. 
\end{proof} 

Now we provide the proof of the gradient estimate \eqref{eq:ang_grad_est}. 
From the previous Lemma, we
only need to estimate the gradient on the boundary $\partial\Omega$. To do this, 
we decompose the equation in \eqref{eq:ang_pro} along the tangential and normal directions. Using the formula from Lemma \ref{lem:bd_grad_cal}, the second normal derivative $D_N D_N u$ can be computed. \\ \\
\textbf{Proof of Theorem} \ref{thm:ang_grad_est}.
Note that we just need to consider the case that the maximum of $|Du|^2$ occurs at $(x_0,\tau)\in \Gamma_T$. Suppose $|D_T u| (x_0,\tau)< 1$, by Lemma \ref{lem:bd_grad_cal}, item 3, it holds that
\begin{equation}
    |Du|^2(x_0,\tau)=\left.\frac{|D_T u|^2+1}{\sin^2 \theta}\right|_{x_0}-1 < \frac{2}{\sin^2 \theta_0}-1,
\end{equation}
where in the last step, we have used assumption A2 about $\theta$.
Hence the gradient bound holds in this case. Therefore without loss of generality, 
we can assume that $|D_T u| (x_0,\tau)\geq 1$.

At any maximum point $(x_0,\tau)$ of $|Du|^2$, we have
\begin{equation} \label{eq:ang_max_normal}
    D_N |Du|^2(x_0,\tau) \leq 0,
\end{equation}
\begin{equation}
    D_T |Du|^2(x_0,\tau) = 0.
\end{equation}
The latter in the above implies $D_T v(x_0,\tau)=0$. From Lemma \ref{lem:bd_grad_cal}, we have
\begin{eqnarray} \label{eq:3.8}
D_T D_N u&=&\sin{\theta}(D_T \theta)v,\\
D_N D_T u&=&\sin{\theta}(D_T \theta)v+k D_T u,
\label{eq:3.9}\\
D_T D_T u&=&\frac{ \sin{\theta} \cos{\theta} (D_T \theta)v^2 }{D_T u}.
\label{eq:3.10}
\end{eqnarray}
We can then rewrite \eqref{eq:ang_max_normal} as
\begin{equation} \label{eq:ang_ineq1}
    D_N u \ D_N D_N u (x_0,\tau) + D_T u \ D_N D_T u (x_0,\tau) \leq 0.
\end{equation}

Consider the orthonormal matrix $O=(T,N)$. In order to obtain $D_N D_N u$, 
we will rewrite \eqref{eq:ang_pro} by decomposing the elliptic operator into its tangential and normal components:
\begin{equation} \label{eq:ang_ut_rew}
\begin{split}
    u_t & = \text{tr} \left( A\,D^2u \right) \\
        & = \text{tr} \left( O^t A O O^t \ D^2 u \ O  \right) \\
        & =  \left( T^t A T \right) \left( T^t \ D^2 u \  T \right) 
        +  \left( T^t A N \right) \left( N^t \ D^2 u \  T \right) \\
        & \ \ \ \  +  \left( N^t A T \right) \left( T^t \ D^2 u \ N \right) + \left( N^t A N \right) \left( N^t \ D^2 u \ N \right) \\
        & =  a^{T T} \left( D_{T} D_{T} u -\langle D_{T} T , Du \rangle   \right) +  a^{T N} \left( D_{T} D_{N} u -\langle D_{T} N , Du \rangle   \right) \\
        & \ \ \ \  +  a^{N T} \left( D_{N} D_{T} u -\langle D_{N} T , Du \rangle   \right) + a^{NN} \left( D_{N} D_{N} u -\langle D_{N} N , Du \rangle   \right) \\
        & =  a^{T T} D_{T} D_{T} u  - k a^{T T} D_N u +  a^{T N} D_{T} D_N u +  k a^{T N} D_{T} u \\
        & \ \ \ \  +  a^{N T} D_N D_{T} u + a^{NN} D_N D_N u,
\end{split}
\end{equation}
where we have used again the formulas from Lemma \ref{lem:bd_grad_cal}
combined with the following notations:
\begin{equation}\label{eq:att.2d}
a^{T T}  := T^t A T ,\quad
a^{T N}  := T^t A N ,\quad
a^{N T}  := N^t A T ,\quad\text{and}\quad
a^{NN}   := N^t A N.
\end{equation}
(Note that the above are all scalars, so that $a^{TN} = a^{NT}$.)
Hence we can solve for $D_N D_N u$ by expressing it in terms of other second order derivatives of $u$ and $u_t$,
\begin{equation} \label{eq:ang_NNu}
    a^{NN} D_N D_N u = u_t - a^{TT} D_T D_T u - a^{TN} D_T D_N u - a^{NT} D_N D_T u + k a^{TT} D_N u - k a^{TN} D_T u.
\end{equation}

Next, we calculate $a^{TT},\ a^{TN},\ a^{NN}\ $ at $(x_0,\tau)$. 
For simplicity, 
we write
\begin{eqnarray*}
    F &=& F\left(\frac{Du,-1}{v}\right),\\
    G &=& G\left(\frac{Du,-1}{v}\right),\\
    D_{p_i}(F) &=& D_{p_i} F(\xi) \Bigg|_{p=(Du,-1)},\quad\text{for $i=1,2$,}\\
    D^2_{p_i p_j} (F) &=& D^2_{p_i p_j} F(\xi) \Bigg|_{p=(Du,-1)},\quad\text{for $i,j=1,2$,}
\end{eqnarray*}and 
\begin{equation*}
    D_p (F) = (D_{p_1}(F),D_{p_2}(F)),
\end{equation*}
\begin{equation*}
    D^T_p (F) = \langle D_p (F), T \rangle,\ \ D^N_p (F) = \langle D_p (F), N \rangle,
\end{equation*}
\begin{equation*}
    F^{TT}=T^t [D^2(F)]  T,   
\end{equation*}
\begin{equation*}
    F^{TN}=F^{NT}=N^t [D^2(F)]  T,
\end{equation*}
\begin{equation*}
    F^{NN}=N^t [D^2(F)]  N.
\end{equation*}
(The above expressions are all evaluated at $(x_0,\tau)$.)
From Section \ref{sec:prelim}, we have
\begin{equation} \label{eq:ang_F_sep}
\begin{split}
    F(Du,-1) &= v F, \\
    D_{p_i} F (Du,-1) &= \frac{u_{x_i}}{v} F + v D_{p_i} (F),\\
    D^2_{p_i p_j} F (Du,-1) &= \frac{1}{v} \left( \delta_{ij} - \frac{u_{x_i} u_{x_j}}{v^2} \right) F + \frac{u_{x_i}}{v} D_{p_j} (F) + \frac{u_{x_j}}{v} D_{p_i} (F) + v D^2_{p_i p_j} (F),
\end{split}
\end{equation}
so that we can rewrite $a^{ij}$ as
\begin{equation}
    a^{ij}= \left( \delta_{ij} - \frac{u_{x_i} u_{x_j}}{v^2} \right) F G + u_{x_i} D_{p_j} (F) \ G + u_{x_j} D_{p_i} (F) \ G + v^2 G D^2_{p_i p_j} (F).
\end{equation}
Hence from \eqref{eq:att.2d}, we get
\begin{equation} \label{eq:ang_aTN_sep}
\begin{split}
    & a^{TT}=\frac{1+|D_N u|^2}{v^2} F G + 2 D_T u\ G\ D^T_p (F) + v^2 G F^{TT} \\
    &\ \ \ \ \ =I_1(a^{TT}) + I_2(a^{TT}) + I_3(a^{TT}), \\
    & a^{TN}=-\frac{D_T u \ D_N u}{v^2} F G + \left( D_T u\ D^N_p (F) + D_N u\ D^T_p (F) \right) G + v^2 G F^{TN} \\
    &\ \ \ \ \ =I_1(a^{TN}) + I_2(a^{TN}) + I_3(a^{TN}), \\
    & a^{NN}=\frac{1+|D_T u|^2}{v^2} F G + 2 D_N u\ G\ D^N_p (F) + v^2 G F^{NN} \\
    &\ \ \ \ \ =I_1(a^{NN}) + I_2(a^{NN}) + I_3(a^{NN}),
\end{split}    
\end{equation}
where in the above, we have decomposed each expression into three components $I_1,I_2$,
and $I_3$ in such a way that
$I_1$ consists of $F$, 
$I_2$ consists of $D_{p_i}(F)$ and 
$I_3$ consists of $D^2_{p_i p_j} (F)$. 

Upon substituting the expression \eqref{eq:ang_NNu} of $D_N D_N u$ into equation \eqref{eq:ang_ineq1} and multiplying both sides by $a^{NN}$, we have
\begin{equation}
\begin{split} \label{eq:ang_ineq2}
    0 \geq & D_N u \Big( u_t - a^{TT} D_T D_T u - a^{TN} D_T D_N u - a^{NT} D_N D_T u + k a^{TT} D_N u - k a^{TN} D_T u \Big) \\ 
    & + a^{NN} D_T u \  D_N D_T u.
\end{split}
\end{equation}
By the boundary condition in \eqref{eq:ang_pro} $D_N u= -v\cos{\alpha}$, and using
the identities \eqref{eq:3.8}--\eqref{eq:3.10}, and \eqref{eq:ang_F_sep},  
the above becomes 
\begin{equation} \label{eq:ang_ineq3}
\begin{split}
    u_t v \cos{\theta} & \geq  \Big(a^{TT} D_T D_T u + a^{TN} D_T D_N u + a^{TN} D_N D_T u - k a^{TT} D_N u + k a^{TN} D_T u\Big) v \cos{\theta} \\
    & \ \ \ + a^{NN} D_T u \ D_N D_T u \\
    & =  a^{TT} \frac{ \sin{\theta} \cos^2\theta (D_T \theta) v^3 }{D_T u} 
    + a^{TN} \sin{\theta} \cos{\theta} (D_T \theta) v^2 \\
    & \ \ \ + a^{TN} \left(\sin{\theta} \cos{\theta} (D_T \theta) v^2 + k \cos{\theta} (D_T u) v\right) \\
    & \ \ \ + a^{TT} k \cos^2\theta \ v^2
    + a^{TN} k \cos{\theta} (D_T u) v
    + a^{NN} \left(\sin{\theta} (D_T \theta) v + k D_T u\right) D_T u \\
    & = \left(I_1(a^{TT}) + I_2(a^{TT}) + I_3(a^{TT})\right) \left(\frac{ \sin{\theta} \cos^2\theta (D_T \theta) v^3 }{D_T u} + k \cos^2\theta \ v^2\right) \\
    & \ \ \ + \left(I_1(a^{TN}) + I_2(a^{TN}) + I_3(a^{TN}) \right) \left(2\sin{\theta} \cos{\theta} (D_T \theta) v^2 + 2k \cos{\theta} (D_T u) v \right) \\
    & \ \ \ + \left(I_1(a^{NN}) + I_2(a^{NN}) + I_3(a^{NN}) \right) \left(\sin{\theta} (D_T \theta) v + k D_T u \right) D_T u \\
    & = J_1 +J_2 +J_3,
\end{split}
\end{equation} 
where for $i=1,2,3$, $J_i$ is the sum of the expressions containing 
$I_i(a^{TT})$, $I_i(a^{TN})$ and $I_i(a^{NN})$ in \eqref{eq:ang_ineq3}. In the following,
we compute each of these terms in detail. 
\begin{equation} \label{eq:ang_J1}
    \begin{split} 
        J_1 = & I_1(a^{TT})\left(\frac{ \sin{\theta} \cos^2\theta (D_T \theta) v^3 }{D_T u} + k \cos^2\theta \ v^2\right) \\
        & + I_1(a^{TN}) \left(2\sin{\theta} \cos{\theta} (D_T \theta) v^2 + 2k \cos{\theta} (D_T u) v \right) \\
        & + I_1(a^{NN}) \left(\sin{\theta} (D_T \theta) v + k D_T u \right) D_T u  \\
        = & \Bigg( \frac{1+|D_N u|^2}{v^2} \left(\frac{ \sin{\theta} \cos^2\theta (D_T \theta) v^3 }{D_T u} + k \cos^2\theta \ v^2 \right)  \\
        & - \frac{D_T u \ D_N u}{v^2} \left(2\sin{\theta} \cos{\theta} (D_T \theta) v^2 + 2k \cos{\theta} (D_T u) v \right) \\
        &  + \frac{1+|D_T u|^2}{v^2} \left(\sin{\theta} (D_T \theta) v + k D_T u\right) D_T u  \Bigg)  F G \\
        = & \Bigg( (1+v^2 \cos^2 \theta) \left(\frac{ \sin{\theta} \cos^2\theta (D_T \theta) v }{D_T u} + k \cos^2\theta \right) \\
        & + 2 \sin{\theta} \cos^2 \theta (D_T \theta) v D_T u + 2k \cos^2 \theta |D_T u|^2 \\ 
        &  +  \sin^2 \theta \left(\sin{\theta} (D_T \theta) v D_T u + k |D_T u|^2 \right)  \Bigg) F G \\
        = & \Bigg( \sin{\theta} \cos^2 \theta (D_T \theta) \frac{v}{D_T u} + \sin{\theta} \cos^4 \theta (D_T \theta) \frac{v^3}{D_T u} +k \cos^2 \theta (1+ v^2 \cos^2 \theta) \\
        & + 2 \sin{\theta} \cos^2 \theta (D_T \theta) v D_T u  + 2 k \cos^2 \theta |D_T u|^2 \\
        & + \sin^3 \theta (D_T \theta) v D_T u + k \sin^2 \theta (v^2 \sin^2 \theta - 1)  \Bigg ) F G \\
        = & \left(  \left( \frac{v^3}{D_T u} \cos^2 \theta + v D_T u \right) \sin{\theta} D_T \theta  + k(v^2-1)      \right) F G  \\
        = & \left( \sin{\theta} D_T \theta  \frac{v(v^2-1)}{D_T u} + k(v^2-1)  \right) 
        F G.
    \end{split}
\end{equation}
Note that the above expression is well-defined as $|D_Tu| > 1$.
\begin{equation} \label{eq:ang_J2}
    \begin{split}
        J_2 = & I_2(a^{TT}) \left(\frac{ \sin{\theta} \cos^2\theta (D_T \theta) v^3 }{D_T u} + k \cos^2\theta \  v^2 \right) \\ 
        & + I_2(a^{TN}) \left(2\sin{\theta} \cos{\theta} (D_T \theta) v^2 + 2k \cos{\theta} (D_T u) v \right) \\
        & + I_2(a^{NN}) \left(\sin{\theta} (D_T \theta) v + k D_T u \right) D_T u \\
        = & \Bigg( 2 D_T u\ D^T_p (F)  \left(\frac{ \sin{\theta} \cos^2\theta (D_T \theta) v^3 }{D_T u} + k \cos^2\theta \ v^2 \right)  \\  
        & + 2 \left(D_T u\ D^N_p (F) + D_N u\ D^T_p (F) \right) \left( \sin{\theta} \cos{\theta} (D_T \theta) v^2 + k \cos{\theta} (D_T u) v \right)  \\
        & + 2 D_N u\ D^N_p (F) \left(\sin{\theta} (D_T \theta) v + k D_T u \right) D_T u    \Bigg) G \\
        = & \Bigg(  D^T_p (F) \Big(\sin{\theta} \cos^2\theta (D_T \theta) v^3 + k \cos^2\theta (D_T u) v^2 \\ 
        &+ \sin{\theta} \cos{\theta} (D_T \theta) (D_N u) v^2 + k \cos{\theta} (D_T u) (D_N u) v  \Big) \\
        & + D^N_p (F) \Big( \sin{\theta} \cos{\theta} (D_T \theta) (D_T u) v^2 + k \cos{\theta} |D_T u|^2 v \\  
        & + \sin{\theta} (D_T \theta) (D_T u) (D_N u) v +  k |D_T u|^2 D_N u  \Big) \Bigg) 2G \\
        = & 0,
    \end{split}
\end{equation}
where the vanishing of $J_2$ is due to the boundary condition
$D_N u= -v\cos{\theta}$.

\begin{equation}  \label{eq:ang_J3}
    \begin{split}
        J_3 = & I_3(a^{TT}) \left(\frac{ \sin{\theta} \cos^2\theta (D_T \theta) v^3 }{D_T u} + k \cos^2\theta \ v^2 \right) \\ 
        & + I_3(a^{TN}) \left(2\sin{\theta} \cos{\theta} (D_T \theta) v^2 + 2k \cos{\theta} (D_T u) v \right) \\
        & + I_3(a^{NN}) \left(\sin{\theta} (D_T \theta) v + k D_T u \right) D_T u  \\
        = & \Bigg(  F^{TT} \left(\frac{ \sin{\theta} \cos^2\theta (D_T \theta) v^3 }{D_T u} + k \cos^2\theta \ v^2 \right) + 2 F^{TN} \left(\sin{\theta} \cos{\theta} (D_T \theta) v^2 + k \cos{\theta} (D_T u) v\right)  \\
        & + F^{NN} \left(\sin{\theta} (D_T \theta v) + k D_T u \right) D_T u   \Bigg) v^2 G \\
        = & \Bigg( \sin{\theta} D_T \theta  \left( F^{TT} \cos^2 \theta \frac{v^3}{D_T u} + 2 F^{TN} \cos{\theta} \ v^2 + F^{NN} v D_T u \right) \\
        & + k \left( F^{TT} \cos^2 \theta \ v^2 + 2 F^{TN} \cos{\theta} \ v D_T u + F^{NN} |D_T u|^2 \right)   \Bigg) v^2 G \\
        = & \left( \sin{\theta} D_T \theta \frac{v}{D_T u} + k \right) \Big\langle (v \cos{\theta}) T + (D_T u) N , (v \cos{\theta}) T + (D_T u)  N \Big\rangle_{D^2 (F)} v^2 G,
    \end{split}
\end{equation}
where in the above, we have used the notation $\langle V,W \rangle _{D^2 (F)}:=D^2_{p_i p_j} (F) \ V^i W^j$.

To continue, by Lemma \ref{lem:bd_grad_cal}, item 3, we have $ \|(v \cos{\theta}) T + (D_T u) N\|=\sqrt{v^2-1}$. From \eqref{eq:cond_aniso}, we infer 
\begin{equation}
\begin{split}
    \Big\langle (v \cos{\theta}) T + (D_T u) N , (v \cos{\theta}) T + (D_T u) N \Big\rangle_{D^2 (F)} 
    & \leq \|D^2(F)\| \Big\|{(v \cos{\theta}) T + (D_T u) N} \Big\|^2 \\
    & \leq \frac{m_2 (v^2-1)}{v^2}.
\end{split}
\end{equation}
Upon adding \eqref{eq:ang_J1}--\eqref{eq:ang_J3}, inequality \eqref{eq:ang_ineq2} then becomes
\begin{equation} \label{eq:ang_ineq4}
\begin{split}
    |u_t| v |\cos{\theta}| \geq & u_t v \cos{\theta} \\
    = & \left( \sin{\theta} \ D_T \theta \frac{v}{D_T u} + k \right) \Bigg((v^2-1)F \\ & + \Big\langle (v \cos{\theta}) T + (D_T u) N , (v \cos{\theta}) T + (D_T u) N \Big\rangle_{D^2 (F)} v^2 \Bigg) G \\
    \geq & \left( \sin{\theta} \ D_T \theta \frac{v}{D_T u} + k \right) \left( F - m_2 \right) (v^2-1) G.
\end{split}
\end{equation}
By Lemma \ref{lem:bd_grad_cal}, item 3, the following holds: 
\begin{equation}
    v^2-1 = \frac{|D_T u|^2}{\sin^2 \theta} + \frac{\cos^2 \theta}{\sin^2 \theta}.
\end{equation}
Then \eqref{eq:ang_ineq4} becomes
\begin{equation}
\begin{split}
    \frac{|u_t| v |\cos{\theta}|}{G \left(F - m_2 \right)} & \geq \left( \sin{\theta} \ D_T \theta   \frac{v}{D_T u} + k \right) \left( \frac{|D_T u|^2}{\sin^2 \theta} + \frac{\cos^2 \theta}{\sin^2 \theta}  \right) \\
    & = D_T \theta \frac{D_T u}{\sin{\theta}} v + D_T \theta \frac{\cos^2 \theta}{D_T u\, \sin{\theta}} v + k(v^2-1).
\end{split}
\label{eq:ang_ineq5}
\end{equation}
Since $|\frac{D_T u}{\sin{\theta}}|\leq v$, $v\geq 1$,  and by assumption $|D_T u|\geq 1$, upon dividing both sides of \eqref{eq:ang_ineq5} by $v$, we obtain 
\begin{equation}
\begin{split}
    (k-D_T \theta)v & \leq \frac{|u_t| |\cos{\theta}|}{G \left(F - m_2 \right)} + \frac{k}{v} - D_T \theta \frac{\cos^2 \theta}{D_T u\, \sin{\theta}} 
    \leq \frac{|u_t| |\cos{\theta}|}{G \left(F - m_2 \right)} + k + \frac{k}{\sin{\theta}}.
\end{split}
\end{equation}
Then by Lemma \ref{lem:ang_ut_est}, $|u_t|\leq C_2$ and assumptions A2 and A3, we 
can conclude that 
\begin{equation}
    v \leq \frac{1}{\delta_0} \left( \frac{C_2}{g_0 (m_0-m_2)} + k_0 + \frac{k_0}{\sin{\theta_0}} \right),
\end{equation}
finishing the proof of the gradient estimate \eqref{eq:ang_grad_est}.
\hfill$\square$

Next, we will construct a translating solution to \eqref{eq:ang_pro} in the form 
$u(x,t) = w(x) + \lambda t$ for some function $w$ and constant $\lambda$.
Then $w$ and $\lambda$ satisfy the following elliptic problem:
\begin{equation*} 
\left\{ \begin{array}{cllcl}
    \lambda & =& G (Dw,-1) \ D^2_{p_i p_j} F (Dw,-1) w_{x_i x_j}    \quad & \text{in} & \ \Omega,  \\
     D_N w & = & - \sqrt{1+|Dw|^2} \cos \theta \quad & \text{on} & \ \partial \Omega,
\end{array} \right.
\end{equation*}
where $\lambda$ is to represent the speed of a translating invariant solution.
Similar to \cite{AW2}, the above equation is solved by first considering the following nonlinear eigenvalue problem:
\begin{equation} \label{eq:ang_eign_pro}
\left\{ \begin{array}{cllcl}
     \epsilon w^{\epsilon} &=& G (Dw^{\epsilon},-1) \ D^2_{p_i p_j} F (Dw^{\epsilon},-1) w^{\epsilon}_{x_i x_j}    \quad & \text{in} & \ \Omega,  \\
     D_N w^{\epsilon} &=& - \sqrt{1+|Dw^{\epsilon}|^2} \cos \theta \quad & \text{on} & \ \partial \Omega. 
\end{array} \right.
\end{equation}
In fact, the technique in the proof of Theorem 3.1 is also applicable to the above equation. 
For what follows, we basically use the approach in \cite{AW2} and \cite{GMWW}.
\begin{theorem}
Under the assumptions in Theorem 3.1, there is a unique $\lambda$ such that a solution exists for \eqref{eq:ell_ang_pro}.
In this case, the solution is also unique.
\end{theorem}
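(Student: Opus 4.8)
The strategy, following \cite{AW2} and \cite{GMWW}, is to produce the translating solution $w$ as a limit of solutions $w^\epsilon$ to the regularized eigenvalue problem \eqref{eq:ang_eign_pro} as $\epsilon\to 0$, then identify $\lambda = \lim_{\epsilon\to 0}\epsilon w^\epsilon(x_\epsilon)$ for a suitable sequence of points $x_\epsilon$. First I would establish existence of a smooth solution $w^\epsilon$ to \eqref{eq:ang_eign_pro} for each $\epsilon>0$. Here the zeroth-order term $-\epsilon w^\epsilon$ on the left gives coercivity, so one obtains an a priori $C^0$ bound $|\epsilon w^\epsilon|\le C$ directly from the maximum principle (evaluating at interior extrema and, at a boundary extremum, using the Hopf lemma together with the contact angle condition exactly as in the proof of Lemma \ref{lem:ang_ut_est}, with $\epsilon w^\epsilon$ playing the role of $u_t$). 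Next comes the key point: the gradient estimate. The argument in the proof of Theorem \ref{thm:ang_grad_est} carries over essentially verbatim once $u_t$ is replaced by $\epsilon w^\epsilon$ — the decomposition \eqref{eq:ang_ut_rew} of the operator into $a^{TT},a^{TN},a^{NN}$ components, the cancellation $J_2 = 0$ from the boundary condition, and the estimate of $J_3$ via $m_2<m_0$ are all unchanged, since none of them used the time derivative structure. One arrives at $\sup_\Omega |Dw^\epsilon| \le C_1$ with $C_1$ independent of $\epsilon$, now with the role of $C_2$ taken by the uniform bound on $|\epsilon w^\epsilon|$. This uniform ellipticity, combined with standard Schauder / Krylov--Safonov estimates for quasilinear elliptic equations with oblique boundary data, yields uniform $C^{2,\alpha}(\overline\Omega)$ bounds on $w^\epsilon - \min_{\overline\Omega} w^\epsilon$ (subtracting a constant to kill the non-compactness coming from the fact that constants are almost in the kernel).

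With these bounds in hand I would extract, along a subsequence $\epsilon\to 0$, convergence $w^\epsilon - c_\epsilon \to w$ in $C^2(\overline\Omega)$ for appropriate constants $c_\epsilon$, and $\epsilon w^\epsilon \to \lambda$ for some constant $\lambda$ (the limit is constant because $Dw^\epsilon$, hence $D(\epsilon w^\epsilon) = \epsilon Dw^\epsilon$, tends to zero uniformly). Passing to the limit in \eqref{eq:ang_eign_pro} shows that $(w,\lambda)$ solves \eqref{eq:ell_ang_pro}, and $w$ inherits the contact angle boundary condition by $C^1$ convergence. This gives existence of a solution to \eqref{eq:ell_ang_pro} with $w$ smooth by elliptic regularity.

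For uniqueness of $\lambda$: suppose $(w_1,\lambda_1)$ and $(w_2,\lambda_2)$ both solve \eqref{eq:ell_ang_pro}. The translating solutions $\tilde w_i(x,t) = w_i(x) + \lambda_i t$ are both solutions of the parabolic problem \eqref{eq:ang_pro}; if $\lambda_1 \ne \lambda_2$, say $\lambda_1 < \lambda_2$, then for a suitable constant $K$ one has $\tilde w_1(x,0) = w_1 + K \le w_2 = \tilde w_2(x,0)$, and the comparison principle for \eqref{eq:ang_pro} (which holds because the equation is quasilinear parabolic with the oblique boundary condition, and is uniformly parabolic on solutions by the gradient estimate) would force $\tilde w_1(\cdot,t)\le \tilde w_2(\cdot,t)$ for all $t$; but $\tilde w_1 - \tilde w_2 = (w_1 - w_2 + K) + (\lambda_1 - \lambda_2)t \to -\infty$, so this is consistent — the real contradiction must instead come from comparing in the other order. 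More robustly, one argues as in \cite{AW2}: integrate the equation in \eqref{eq:ell_ang_pro} and use the divergence structure together with the boundary condition. Writing the operator as $G(Dw,-1)\,\mathrm{div}\big(D_pF(Dw,-1)\big)$ up to the homogeneity bookkeeping from Section \ref{sec:prelim}, one tests against a fixed function and uses the divergence theorem; the boundary contributions are determined by $D_Nw = -v\cos\theta$, which is the same for $w_1$ and $w_2$, and this pins down $\lambda$ uniquely in terms of $\theta$, $\partial\Omega$, $F$ and $G$ alone. I expect this integral identity step to be the main obstacle, since the operator is not in pure divergence form after multiplying by the mobility $G$, so one must either restrict the uniqueness argument appropriately or run a doubling/comparison argument directly on the parabolic flow.

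Finally, uniqueness of $w$ given $\lambda$: if $w_1, w_2$ both solve \eqref{eq:ell_ang_pro} with the same $\lambda$, consider $w_1 - w_2$. At an interior maximum of $w_1 - w_2$ the difference of the equations, via the mean value theorem applied to the coefficients (uniformly elliptic by the gradient estimate), gives a linear uniformly elliptic inequality $\mathcal{L}(w_1 - w_2)\le 0$ with no zeroth order term, so the strong maximum principle applies; at a boundary maximum, the linearized boundary condition is oblique and $\mathrm{Hopf}$'s lemma gives a contradiction unless $w_1 - w_2$ is constant. Hence $w$ is unique up to an additive constant, which is the best one can hope for since the problem is invariant under $w \mapsto w + \mathrm{const}$. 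I would phrase the final statement accordingly (``the solution is unique up to an additive constant'').
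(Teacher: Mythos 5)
Your overall strategy matches the paper's: regularize to the eigenvalue problem \eqref{eq:ang_eign_pro}, prove uniform bounds on $\epsilon w^{\epsilon}$ and $Dw^{\epsilon}$, extract a limit $(w,\lambda)$, and then use strong maximum principle plus Hopf's lemma for uniqueness. However, two of the steps have real gaps.

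First, the $C^0$ bound $|\epsilon w^{\epsilon}|\leq C$. You propose to handle a boundary extremum of $\epsilon w^{\epsilon}$ ``exactly as in the proof of Lemma~\ref{lem:ang_ut_est}, with $\epsilon w^{\epsilon}$ playing the role of $u_t$.'' That does not transfer. The boundary argument in Lemma~\ref{lem:ang_ut_est} hinges on the fact that $u_t$ satisfies the \emph{linearized} boundary condition $D_N u_t = -\cos\theta\,\frac{Du\cdot Du_t}{v}$, obtained by differentiating $D_N u = -v\cos\theta$ in $t$; together with $D_T u_t = 0$ at the extremum this forces $D_N u_t = 0$, which then clashes with Hopf. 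There is no time variable in the elliptic problem: $w^{\epsilon}$ satisfies only the original nonlinear condition $D_N w^{\epsilon} = -\sqrt{1+|Dw^{\epsilon}|^2}\cos\theta$, which at a boundary extremum gives no relation forcing $D_N(\epsilon w^{\epsilon})$ to vanish, and Hopf yields no contradiction. The paper instead constructs a barrier $\psi = a_0 d$ with $d$ the distance to $\partial\Omega$ and $a_0$ chosen so that $a_0 < -\cos\theta\sqrt{1+a_0^2}$ (possible by A2). A boundary minimum of $\psi - w^{\epsilon}$ is ruled out by comparing $\frac{D_N\psi}{\sqrt{1+|D\psi|^2}}$ with $\frac{D_N w^{\epsilon}}{\sqrt{1+|Dw^{\epsilon}|^2}} = -\cos\theta$, so the minimum (and, symmetrically, the maximum) is interior, where the equation yields $\epsilon w^{\epsilon} \leq a^{ij}(D\psi)\psi_{x_ix_j} \leq C$. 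This barrier construction is the missing ingredient in your sketch.

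Second, the uniqueness of $\lambda$. You set up a comparison of $\tilde w_1$ and $\tilde w_2$ as solutions of the parabolic flow, observe the ordering you chose is not contradictory, note the contradiction ``must instead come from comparing in the other order,'' but then abandon this in favor of an integral identity which you yourself flag as the ``main obstacle'' because the mobility $G$ destroys the divergence structure. The paper does not use any integral identity. Its proof is precisely the final paragraph of your own proposal, used once to get both conclusions at the same time: set $\tilde u = u^1 - u^2$ with $\lambda_1 \leq \lambda_2$, so $\tilde a^{ij}\tilde u_{x_ix_j} + \tilde b^i\tilde u_{x_i} = \lambda_1 - \lambda_2 \leq 0$; maximum principle pushes the max to $\partial\Omega$, where Hopf gives $D_N u^1 < D_N u^2$, while $D_T u^1 = D_T u^2$ and the common boundary relation $\frac{D_N u^i}{\sqrt{1+|Du^i|^2}} = -\cos\theta$ force $D_N u^1 = D_N u^2$ (since $x\mapsto x/\sqrt{c^2+x^2}$ is strictly increasing with $c^2 = 1+|D_Tu|^2$ the same for both). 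Hence $\tilde u$ is constant, and then $\lambda_1 - \lambda_2 = 0$. Your alternative route of completing the parabolic comparison in the correct direction would also work, but the integral identity is neither needed nor pursued by the paper.
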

\begin{proof}
Firstly, we want to show that there exists a positive constant $C$ independent of $\epsilon$ such that $|\epsilon w^{\epsilon}|\leq C$. 

For this purpose, consider the function $\psi=a_0 d$ defined in a neighborhood of $\partial\Omega$.
Here $d$ is the distance function to $\partial\Omega$ and $a_0$ is a constant such that 
\begin{equation*}
    a_0 < -\cos{\theta(\cdot)} \sqrt{1+a_0^2}.
\end{equation*}
(By assumption A2, such an $a_0$ exists.)
Then we extend $\psi$ smoothly to all of $\Omega$. Note that
$D_N\psi = a_0$ on $\partial\Omega$. Now suppose $\psi-w_{\epsilon}$ attains its minimum at $x_0$. We have following two cases: \\
\textbf{Case 1:} $x_0 \in \partial\Omega$. This implies $D_T \psi(x_0) = D_T w^{\epsilon}(x_0)$ and $D_N \psi(x_0) \geq D_N w^{\epsilon}(x_0)$. Hence at $x_0$, we have
\begin{equation*}
      -\cos{\theta} = \frac{D_N w^{\epsilon}}{\sqrt{1+|D w^{\epsilon}}|^2} \leq \frac{D_N \psi}{\sqrt{1+|D \psi|^2}} < -\cos{\theta},
\end{equation*}
which is a contradiction. (In the above, we have used the facts that 
$|Dw^\epsilon|^2 = |D_Tw^\epsilon|^2 + |D_Nw^\epsilon|^2
\leq
|D_T\psi|^2 + |D_N\psi|^2
=|D\psi|^2
$ and the function $f(x) = \frac{x}{\sqrt{c^2+x^2}}$ is increasing 
in $x\in\mathbb{R}$.)\\
\textbf{Case 2:} $x_0 \in \Omega$. This implies $D\psi(x_0) = D w^{\epsilon}(x_0)$ and $\big[ D^2 \psi(x_0) \big] \geq \big[ D^2 w^{\epsilon}(x_0) \big]$. Hence at $x_0$, for some constant $C=C(\theta, \Omega)$,
\begin{equation*}
    \epsilon w^{\epsilon} = a^{ij} (D w^{\epsilon}) w_{x_i x_j}^{\epsilon} \leq a^{ij} (D \psi) \psi_{x_i x_j} \leq C,
\end{equation*}
which suggests an upper bound for $\epsilon w^{\epsilon}$. Similarly, considering the maximum of $\psi-w_{\epsilon}$ suggests a lower bound. Hence $|\epsilon w^{\epsilon}|$ is bounded.

Once the above is achieved, we can then directly apply the proof of Theorem 3.1 by replacing $u_t$ with $\epsilon w^{\epsilon}$ to obtain an uniform estimate for $|Dw^{\epsilon}|$.
This implies $| D (\epsilon w^{\epsilon})| \rightarrow 0$ as $\epsilon \rightarrow 0$. Hence $\epsilon w^{\epsilon} \rightarrow \lambda$ for some constant $\lambda$.

For the uniqueness, assume $u^1$ and $u^2$ are two solutions of \eqref{eq:ell_ang_pro} with $\lambda_1$ and $\lambda_2$ respectively and $\lambda_1 \leq \lambda_2$. Let $\Tilde{u}=u^1 - u^2$. Then we have
\begin{equation*}
    \Tilde{a}^{ij} \Tilde{u}_{x_i x_j} + \Tilde{b}^{i} \Tilde{u}_{x_i} \leq 0,
\end{equation*}
where $\Tilde{a}^{ij} = a^{ij} (Du^1)$ and $\Tilde{b}^{i} = u^2_{x_k x_l} \int^1_0 a^{kl}_{p_i} (\eta Du^1 + (1-\eta) Du^2 ) d\eta $. The maximum principle suggests that 
$\max \Tilde{u}$ occurs on the boundary, $x_0 \in \partial \Omega$. Then Hopf lemma implies that at $x_0$, we have $D_N u^1 < D_N u^2$. Together with $D_T u^1 = D_T u^2$, this would then contradict the following,  
\begin{equation*}
    \frac{D_N u^1}{\sqrt{1+|D u^1|^2}} =  \frac{D_N u^2}{\sqrt{1+|D u^2|^2}}
    \,\,(=\cos\theta).
\end{equation*}
Hence $\Tilde{u}$ must be a constant and $\lambda$ is unique.
\end{proof} 
From \textbf{Theorem 3.4}, the solution $w(x)$ to the elliptic problem \eqref{eq:ell_ang_pro} gives a translating solution $\Tilde{w}(x,t) = w(x) + \lambda t$ which solves the following parabolic boundary problem:
\begin{equation} 
    \left\{ \begin{array}{cllcl}
            u_t & = & G(Du,-1) \  D^2_{p_i p_j} F (Du,-1) u_{x_i x_j}   \quad &  \text{in} & \ Q_T, \\
            D_N u & = & - \sqrt{1+|Du|^2} \cos \theta  \quad &  \text{on} & \ \Gamma_T,  \\
         u(\cdot,0) & = & w(\cdot)  \quad \ & \text{in} & \   \Omega_0.
    \end{array}  \right.
\end{equation}

Next we show some convergence property of $u$ as $t\longrightarrow\infty$.
\begin{corollary}
For a solution $u=u(x,t)$ of \eqref{eq:ang_pro}, there exists a constant $C$ such that
\begin{equation}
    |u(x,t)-\lambda t| \leq C.
\end{equation}   
\end{corollary}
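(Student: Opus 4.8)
The plan is to compare the given solution $u$ with the translating solution $\widetilde w(x,t)=w(x)+\lambda t$ constructed above, which solves \eqref{eq:ang_pro} with the initial datum $u_0$ replaced by $w$. Set $\phi:=u-\widetilde w$. Since $D\widetilde w=Dw$ and $Du$ both satisfy an a priori gradient bound (Theorem \ref{thm:ang_grad_est} for $u$, and the analogue for $w$ obtained by running the proof of Theorem \ref{thm:ang_grad_est} on \eqref{eq:ell_ang_pro} as indicated in the preceding theorem), subtracting the two copies of the quasilinear equation and interpolating along the segment joining $Du$ and $D\widetilde w$ yields
\begin{equation*}
\phi_t=\widetilde a^{ij}\phi_{x_ix_j}+\widetilde b^{i}\phi_{x_i}\quad\text{in }Q_T,
\end{equation*}
with $\widetilde a^{ij}=a^{ij}(Du)$ and $\widetilde b^{i}=\widetilde w_{x_kx_l}\int_0^1 a^{kl}_{p_i}\!\big(\eta\,Du+(1-\eta)D\widetilde w\big)\,d\eta$, exactly as in the uniqueness part of the proof of the preceding theorem. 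By the gradient bound, $(\widetilde a^{ij})$ is uniformly elliptic with bounded coefficients, $\widetilde b^{i}$ is bounded because $w$ is smooth by elliptic regularity, and the equation for $\phi$ carries no zeroth-order term; moreover $u$ and $\widetilde w$ satisfy the same contact angle boundary condition on $\Gamma_T$.

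First I would show $\max_{\overline{Q_T}}\phi=\max_{\overline\Omega}(u_0-w)$. As $\overline{Q_T}$ is compact, $\phi$ attains its maximum $M$ at some $(x_*,t_*)$; if $t_*=0$ there is nothing to prove. If $t_*>0$ and $x_*\in\Omega$, the strong maximum principle for the uniformly parabolic, zeroth-order-free equation forces $\phi\equiv M$ on $\overline\Omega\times[0,t_*]$, whence $M=\max_{\overline\Omega}(u_0-w)$. If $t_*>0$ and $x_*\in\partial\Omega$, then $D_T\phi(x_*,t_*)=0$, i.e. $D_Tu=D_T\widetilde w$ there; by the parabolic Hopf lemma either $\phi\equiv M$ on $\overline\Omega\times[0,t_*]$ (and we conclude as before) or $D_N\phi(x_*,t_*)<0$, i.e. $D_Nu<D_N\widetilde w$. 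Since $s\mapsto s/\sqrt{1+|D_Tu|^2+s^2}$ is strictly increasing, this together with $D_Tu=D_T\widetilde w$ contradicts the common boundary condition $\dfrac{D_Nu}{\sqrt{1+|Du|^2}}=\dfrac{D_N\widetilde w}{\sqrt{1+|D\widetilde w|^2}}=-\cos\theta$ at $(x_*,t_*)$ — this is precisely the argument used to prove uniqueness of $\lambda$ above. Applying the same reasoning to $-\phi$ gives $\min_{\overline{Q_T}}\phi=\min_{\overline\Omega}(u_0-w)$, so that $\sup_{\overline\Omega}|\phi(\cdot,t)|\le\|u_0-w\|_{L^\infty(\overline\Omega)}$ for every $t$, with a bound independent of $T$.

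Finally, $|u(x,t)-\lambda t|=|\phi(x,t)+w(x)|\le\|u_0-w\|_{L^\infty(\overline\Omega)}+\|w\|_{L^\infty(\overline\Omega)}=:C$, which is the asserted estimate. The only delicate point is the boundary case: one must invoke the version of the parabolic Hopf lemma valid for smooth solutions of the linearized equation and carry out the sign/monotonicity bookkeeping for the oblique (contact angle) condition. However, this is identical to the computation already performed for the uniqueness of $\lambda$, so no genuinely new obstacle arises; the remaining ingredients (uniform ellipticity, boundedness of $\widetilde b^{i}$, global existence of $u$) are all supplied by Theorem \ref{thm:ang_grad_est} and elliptic/parabolic regularity.
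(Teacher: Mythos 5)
Your proposal is correct and follows essentially the same route as the paper: you form $\phi = u - \widetilde w$, write the linearized parabolic equation it satisfies, and combine the strong maximum principle in the interior with the Hopf lemma and the monotonicity of the oblique boundary condition at $\partial\Omega$ — exactly the argument the paper borrows from its uniqueness theorem. The additional detail you supply (explicitly deducing that the extremum must lie at $t=0$, and the final triangle-inequality step) is a faithful unpacking of the paper's terse ``Similar to the proof in Theorem 3.4'' remark.
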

\begin{proof}
 Consider $\Tilde{u}(x,t) = u(x,t) - \Tilde{w}(x,t)$. It satisfies the following parabolic equation
\begin{equation*}
    \Tilde{u}_t = \Tilde{a}^{ij} \Tilde{u}_{x_i x_j} + \Tilde{b}^{i} \Tilde{u}_{x_i},
\end{equation*}
where $\Tilde{a}^{ij} = a^{ij} (Du)$ and $\Tilde{b}^{i} = (\Tilde{w})_{x_k x_l} \int^1_0 a^{kl}_{p_i} (\eta Du + (1-\eta) D\Tilde{w} ) d\eta $. If $\Tilde{u}$ attains its maximum and minimum at $t=0$, the result follows immediately. Next,
suppose $\Tilde{u}$ attains its maximum and minimum at $(x_0,\tau) \in \partial\Omega \times (0,\infty)$. Similar to the proof in Theorem 3.4, we can also conclude that $\Tilde{u}$ is a constant. Hence the result follows.
\end{proof}
Finally, we show the uniqueness, up to a constant, of long time limit to \eqref{eq:ang_pro}.
\begin{theorem}
Suppose $u^1$ and $u^2$ are two distinct solutions of \eqref{eq:ang_pro} with initial data $u_0^1$ and $u_0^2$. Then $u^1-u^2$
converges to a constant function as $t\rightarrow \infty$. In particular, any solution $u$ to \eqref{eq:ang_pro} 
converges to $\Tilde{w}=\lambda t+w$, up to a constant.  
\end{theorem}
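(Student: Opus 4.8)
The plan is to show that the difference $\tilde u = u^1 - u^2$ of two solutions converges to a constant, and then deduce the second assertion by comparing an arbitrary solution with the translating solution $\tilde w$. First I would observe that, since both $u^1$ and $u^2$ solve the quasilinear equation in \eqref{eq:ang_pro}, their difference satisfies a linear (non-divergence form) parabolic equation
\[
\tilde u_t = \tilde a^{ij}\tilde u_{x_i x_j} + \tilde b^{i}\tilde u_{x_i}\quad\text{in }Q_\infty,
\]
with $\tilde a^{ij} = a^{ij}(Du^1)$ and $\tilde b^{i} = u^2_{x_k x_l}\int_0^1 a^{kl}_{p_i}(\eta Du^1 + (1-\eta)Du^2)\,d\eta$, exactly as in the proof of Theorem 3.4 and the preceding Corollary. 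By the gradient estimate (Theorem \ref{thm:ang_grad_est}) applied to each of $u^1, u^2$, the coefficients $\tilde a^{ij}$ are uniformly elliptic and $\tilde b^i$ is bounded, uniformly in time; moreover, differentiating the contact angle boundary condition for $u^1$ and $u^2$ and subtracting shows that $\tilde u$ satisfies an oblique (indeed, after linearization, a Neumann-type) boundary condition $\beta\cdot D\tilde u = 0$ on $\Gamma_\infty$ with $\beta\cdot N > 0$, again as in Theorem 3.4.

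Next I would run a maximum-principle / oscillation-decay argument. Set $\mathrm{osc}(t) = \max_{\bar\Omega}\tilde u(\cdot,t) - \min_{\bar\Omega}\tilde u(\cdot,t)$. By the weak maximum principle together with Hopf's lemma at the oblique boundary (the argument already used twice in the excerpt: an interior or boundary extremum of $\tilde u$ at a positive time forces, via Hopf, a violation of the boundary condition unless $\tilde u$ is spatially constant there), one shows $\mathrm{osc}(t)$ is nonincreasing in $t$. To get that the limit is actually constant in space, I would invoke the strong maximum principle as mentioned in the introduction: either $\tilde u$ is already constant in $x$ for all $t$, or a Harnack-type / interior-boundary Harnack inequality on unit time intervals, combined with the strong maximum principle and Hopf's lemma for oblique problems, yields a uniform contraction $\mathrm{osc}(t+1) \leq \gamma\,\mathrm{osc}(t)$ with $\gamma < 1$ depending only on the ellipticity constants, $\|\tilde b\|_\infty$, the obliqueness, and $\Omega$ — all uniform in $t$ by the a priori estimates. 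Hence $\mathrm{osc}(t)\to 0$ geometrically. Since by the Corollary $\tilde u$ is bounded, and $\max_{\bar\Omega}\tilde u(\cdot,t)$ is monotone (again by the maximum principle, as $\tilde u_t + L\tilde u \le 0$ type structure bounds the sup) hence convergent, it follows that $\tilde u(\cdot,t)\to c$ uniformly for some constant $c$.

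Finally, for the ``in particular'' statement: given any solution $u$ of \eqref{eq:ang_pro}, apply the result just proved to the pair $u$ and $\tilde w = w + \lambda t$, which is itself a solution of \eqref{eq:ang_pro} (with initial data $w$) by Theorem 3.4. Then $u - \tilde w \to c$ for some constant $c$, i.e. $u$ converges to $\tilde w$ up to a constant, which also re-proves (and refines) the Corollary's bound. I expect the main obstacle to be the contraction step: making the decay $\mathrm{osc}(t+1)\le\gamma\,\mathrm{osc}(t)$ rigorous requires a Harnack inequality (or an explicit barrier/oscillation estimate) for linear oblique-derivative parabolic problems with merely bounded measurable drift, uniform over all time translates — one must be careful that the constants depend only on the quantities controlled by Theorems \ref{thm:ang_grad_est} and the $u_t$ bound of Lemma \ref{lem:ang_ut_est}, and not on $T$; the boundary (oblique) Harnack is the technically heaviest ingredient, though it is standard (e.g. via the results for oblique problems in the parabolic Krylov–Safonov theory). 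The degenerate case where $\tilde a^{ij}$ is only semi-positive definite (when $F$ is not strictly convex) needs a small remark, but the gradient bound confines $Du$ to a compact set on which, under A3, the effective operator is uniformly elliptic in the relevant directions, so the argument goes through.
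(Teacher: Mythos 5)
Your outline is correct, but it takes a genuinely different route from the paper at the crucial step of showing $\operatorname{osc}(\tilde u)(t)\to 0$. You propose a quantitative contraction $\operatorname{osc}(t+1)\le \gamma\,\operatorname{osc}(t)$ derived from a parabolic Harnack inequality for linear oblique-derivative problems with coefficients controlled uniformly in time; this is sound in principle and, if carried out, gives exponential decay of the oscillation, which is stronger than what the paper asserts. The paper instead uses a soft compactness argument: assuming $\operatorname{osc}(\tilde u)(t)\downarrow\delta>0$, it shifts in time by $t_n\to\infty$ (subtracting $\lambda t_n$, using the Corollary to keep the translates bounded), extracts a locally uniformly convergent subsequence $u^{i,n}\to u^{i,\ast}$, and observes that the limit difference $\tilde u^\ast$ has constant oscillation $\delta$ on $\Omega\times(0,\infty)$. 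Since the strong maximum principle and Hopf's lemma force the oscillation of any nonconstant solution of the linearized oblique problem to be strictly decreasing, $\tilde u^\ast$ must be constant, contradicting $\delta>0$. The trade-off is that the paper's compactness route avoids the Harnack machinery you flag as the technical bottleneck (at the cost of needing enough interior and boundary regularity -- a Schauder step -- to pass the PDE and boundary condition to the limit $u^{i,\ast}$, which is standard once the gradient and $u_t$ bounds are in hand); your route would yield a rate of convergence but demands an oblique-boundary parabolic Harnack with constants uniform over the trajectory. Both methods share the same opening (the linearized equation for $\tilde u$, the oblique boundary condition $\tilde c^{ij}\tilde u_{x_i}N^j=0$, and the strict monotonicity of the oscillation via strong maximum principle plus Hopf) and the same closing step of comparing $u$ with $\tilde w$; your ``in particular'' reduction is exactly what the paper intends.

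Two small cautions. First, your claim that $\tilde b^i$ is ``bounded, uniformly in time'' uses not only the gradient bound but also a uniform bound on $D^2 u^2$; this is available from interior/boundary parabolic regularity once the gradient estimate makes the operator uniformly elliptic, but it should be said. Second, in invoking A3 to dispose of possible degeneracy of $\tilde a^{ij}$, note that the gradient bound alone confines $Du$ to a compact set, but you still need $D^2F$ to be strictly positive definite on the relevant cone of directions; the paper's assumption that $F$ is convex with the bounds \eqref{eq:cond_aniso} is what ensures the operator is uniformly elliptic once $|Du|$ is bounded, and this is worth stating explicitly rather than attributing it solely to A3.
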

\begin{proof}
We observe that $\Tilde{u}=u^1-u^2$ satisfies the following linear parabolic equation
\begin{equation}  \label{eq:ang_diff_para}
    \left\{ \begin{array}{lcl}
            \Tilde{u}_t = \Tilde{a}^{ij} \Tilde{u}_{x_i x_j} + \Tilde{b}^{i} \Tilde{u}_{x_i}  \quad &  \text{in} & \ \Omega \times (0,+\infty), \\
            \Tilde{c}^{ij} \Tilde{u}_{x_i} N^j = 0 &  \text{on} & \ \partial \Omega \times (0,+\infty),
    \end{array}  \right.
\end{equation}
where $\Tilde{a}^{ij} = a^{ij} (Du^1)$,  $\Tilde{b}^{i} = (u^2)_{x_k x_l} \int^1_0 a^{kl}_{p_i} (\eta Du^1 + (1-\eta) Du^2 ) d\eta $ and $\Tilde{c}^{ij} = \int_0^1 D_{p_j} h^i (\eta Du^1 + (1-\eta) Du^2 ) d\eta$  (define $h^i(p) = \frac{p^i}{\sqrt{1+|p|^2}}$ and hence $[\Tilde{c}^{ij}]$ is a positive definite matrix). The strong maximum principle implies that the oscillation of $\Tilde{u}$
\begin{equation*}
    \text{osc}(\Tilde{u}) (t) = \max_{\Bar{\Omega}} \Tilde{u}(x,t) - \min_{\Bar{\Omega}} \Tilde{u}(x,t),
\end{equation*}
is strictly decreasing in $t$. The result follows if
\begin{equation*}
    \lim_{t\rightarrow \infty} \text{osc}(\Tilde{u}) (t) =0.
\end{equation*}
Suppose otherwise that $\lim_{t\rightarrow \infty} \text{osc}(u) (t) = \delta>0$. Then for some sequence $t_n \rightarrow +\infty$, we define
\begin{equation*}
    u^{1,n} (x,t) := u^1 (x,t+t_n) - \lambda t_n,
\end{equation*}
and 
\begin{equation*}
    u^{2,n} (x,t) := u^2 (x,t+t_n) - \lambda t_n.
\end{equation*}
Notice that by the last Corollary, we have $|u^{i,n}- \lambda t| \leq C$. 
Then there exists a subsequence of $u^{1,n}$ and $u^{2,n}$ converging locally uniformly to $u^{1,\ast}$ and $u^{2,\ast}$ respectively. Let $\Tilde{u}^{\ast} = u^{1,\ast} - u^{2,\ast}$. Then we have
\begin{equation*}
    \text{osc}(\Tilde{u}^{\ast}) (t) = \lim_{n\rightarrow \infty} \text{osc}(\Tilde{u}^{\ast}) (t+t_n) = \delta.
\end{equation*}
Notice that $\Tilde{u}^{\ast}$ satisfies a parabolic equation similar to \eqref{eq:ang_diff_para}. By the strong maximum principle and Hopf lemma, $\Tilde{u}^{\ast}$ is a constant contradicting the fact that $\text{osc}(\Tilde{u}^{\ast}) (t) = \delta$.
\end{proof}

\section{Dirichlet Problem in $\mathbb{R}^n$}
We found that the method in the proof of Theorem \ref{thm:ang_grad_est} can be applied to obtain a gradient bound on the solution to the following Dirichlet boundary value problem in general dimensions.
\begin{equation} \label{eq:diri_pro}
    \left\{ \begin{array}{cllcl}
            u_t  &=& G(Du,-1) \  D^2_{p_i p_j} F (Du,-1) u_{x_i x_j}   \quad &  \text{in} & \ Q_T, \\
            u &=& f(\cdot,t)  \quad &  \text{on} & \ \Gamma_T  \\
         u(\cdot,0) &=& u_0(\cdot)  \quad \ & \text{in} & \   \Omega_0.
    \end{array}  \right.
\end{equation}
In order to utilize some existing results, we find it convenient to assume that
$f$ is of class $C^3$. The precise condition on $f$ will be stated just before the main theorem below.

Before proceeding, we need to generalize the geometric formulas from Section 2 to $\mathbb{R}^n$. 
Let $x\in \partial\Omega$. Choose an orthonormal basis $T_i$, $1\leq i\leq n-1$, such that each $T_i$ is along a principal direction of $\partial\Omega$ with principal 
curvature $k_i$. As before, let also $N$ be the inward normal to $\partial \Omega$.
Upon extending $T_i$ and $N$ to a neighborhood of $\partial\Omega$ by means of parallel transport along the normal direction of $\partial\Omega$, we have the following formula.
\begin{enumerate}
    \item $D_{T_i} T_i= k_i N $ and $D_{T_j} T_i= 0 $ for $i\neq j$. 
    
    \item $D_{T_i} N = -k_i T_i$.  

\item 
For $D_N T_i$, by considering the geodesic in the direction $T_i$, we have a similar formula as in two dimensions,
\begin{equation} 
    D_N D_{T_i} f = k_i D_{T_i} f + D_{T_i} D_N f.
 \end{equation}
 which implies $D_N T_i = 0$.
 
\item The fact $D_N N = 0$ shows by differentiating $|N|^2=1$ and using $N\cdot T_i =0 $. 
\end{enumerate}

Now consider the boundary value problem \eqref{eq:diri_pro}. We assume there is a constant $M$ such that the boundary
function $f$ is $C^3$ satisfying the following,
\begin{equation}\label{eq:diri_f_ass}
    |f_t|,\ |D_{T_i} f|,\ |D_{T_i T_j} f| \leq M.
\end{equation}
At each point on the boundary $\Gamma_T$, we order the principal curvatures $k_i$ such that for some $\alpha$, $1 \leq \alpha \leq n-1$, we have 
\begin{equation}
    k_1 \geq \cdots \geq k_{\alpha} \geq 0 > k_{\alpha+1} \geq \cdots k_{n-1}.
\end{equation}
Let $\max_i |k_i| <
\infty$. Then we state the corresponding gradient estimate for the solution $u$ of 
\eqref{eq:diri_pro} in the following theorem.
\begin{theorem} \label{thm:ell_grad_est}
Let $M_0,\ m_0,\ m_1,\ m_2,\ g_0,\ G_0$ be given in \eqref{eq:cond_mob} and \eqref{eq:cond_aniso}. Consider the following two constants
\begin{equation}
\begin{split}
    \gamma_1 & := g_0 \left( \frac{2}{2+M^2} m_0 -\sqrt{2} M m_1 - m_2 \right), \\  
    \gamma_2 & := G_0 \left( M_0 +\sqrt{2} M m_1 +m_2 \right).
\end{split}
\end{equation}
Assume that they satisfy the following conditions:
\begin{equation} \label{eq:diri_bd_cur_cond}
    \gamma_1 > 0, \quad \gamma_1 \sum_{i=1}^{\alpha} k_i + \gamma_2\sum_{i=\alpha+1}^{n-1} k_i >0.
\end{equation}
Then there exists a constant $C=C(u_0, M, \partial\Omega, F, n )$ such that
\begin{equation*}
    \sup_{Q_T} |Du| \leq C.
\end{equation*}
\end{theorem}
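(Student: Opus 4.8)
\emph{Proof sketch / plan.}
The plan is to follow the proof of Theorem~\ref{thm:ang_grad_est} almost verbatim, replacing the contact-angle condition by $u=f$; the essential difference is that $u=f$ on $\partial\Omega$ now determines the tangential first and second derivatives of $u$ (in place of the relation $D_Nu=-v\cos\theta$), so that one again solves the governing equation for the single remaining unknown second derivative. First, exactly as in Lemma~\ref{lem:ang_ut_est}, $|u_t|^2$ satisfies $\partial_t|u_t|^2+L|u_t|^2\le 0$ for the operator $L$ of that lemma, so the weak maximum principle puts $\sup_{Q_T}|u_t|$ on $\Gamma_T\cup\Omega_0$; on $\Gamma_T$ one has $u_t=f_t$, hence $|u_t|\le M$ by \eqref{eq:diri_f_ass}, and on $\Omega_0$ one has $u_t=a^{ij}(Du_0)(u_0)_{x_ix_j}$, which is bounded since $u_0\in C^3$. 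Thus $\sup_{Q_T}|u_t|\le C_2:=\max\{M,\sup_{\Omega_0}|u_t|\}$. Next, as in Lemma~\ref{lem:ang_Du_bd}, $\partial_t|Du|^2+L|Du|^2\le 0$, so $\sup_{Q_T}|Du|^2=\sup_{\Gamma_T\cup\Omega_0}|Du|^2$; since $|Du|$ is bounded on $\Omega_0$, it remains to bound $|Du|$ at a boundary maximum $(x_0,\tau)\in\partial\Omega\times(0,T]$.

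At such a point, work in the orthonormal frame $\{T_1,\dots,T_{n-1},N\}$ along the principal directions and abbreviate $u_{T_iT_j}:=T_i^{\,t}D^2u\,T_j$, $u_{NT_i}:=N^tD^2u\,T_i$, $u_{NN}:=N^tD^2u\,N$, and $a^{T_iT_j}:=T_i^{\,t}AT_j$, etc. Since $u=f$ on $\partial\Omega$ and the $T_i$ are tangential, $D_{T_i}u=D_{T_i}f$ (so $|D_{T_i}u|\le M$), and using the frame identities $D_{T_i}T_i=k_iN$, $D_{T_i}T_j=0$ for $i\ne j$, one obtains $u_{T_iT_j}=D_{T_i}D_{T_j}f$ for $i\ne j$ and $u_{T_iT_i}=D_{T_i}D_{T_i}f-k_iD_Nu$. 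Because $x_0$ is a spatial maximum of $|Du|^2$ over $\bar\Omega$ attained on $\partial\Omega$, the conditions $D_{T_j}|Du|^2=0$ and $D_N|Du|^2\le0$, upon differentiating $|Du|^2=\sum_k u_{x_k}^2$, become
\[
(D_Nu)\,u_{NT_j}=-\sum_i (D_{T_i}u)\,u_{T_iT_j}\quad\text{for each }j,\qquad
\sum_i (D_{T_i}u)\,u_{NT_i}+(D_Nu)\,u_{NN}\le 0 .
\]
If $D_Nu=0$ then $|Du|^2=|D_Tf|^2\le(n-1)M^2$ and we are done, so assume $D_Nu\ne0$; we may also assume $(D_Nu)^2\ge1+M^2$, since otherwise $v^2\le2+nM^2$. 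The first identity together with the formulas for $u_{T_iT_j}$ gives $u_{NT_j}=k_jD_{T_j}f-(D_Nu)^{-1}\sum_i(D_{T_i}f)(D_{T_i}D_{T_j}f)$.

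Now decompose the equation in this frame as $u_t=\sum_{i,j}a^{T_iT_j}u_{T_iT_j}+2\sum_i a^{T_iN}u_{NT_i}+a^{NN}u_{NN}$ and solve it for the quantity $a^{NN}u_{NN}$ (not for $u_{NN}$): since $F$ is convex, $A$, hence $a^{NN}$, is nonnegative, so multiplying the normal-derivative inequality above by $a^{NN}$ and substituting this expression for $a^{NN}u_{NN}$, and then substituting the boundary formulas for $u_{T_iT_j}$ and $u_{NT_i}$, one collects everything by powers of $D_Nu$. The part $-k_iD_Nu$ of $u_{T_iT_i}$ produces the term $(D_Nu)^2\sum_{i=1}^{n-1}a^{T_iT_i}k_i$, while every other term has a bounded coefficient — using $|u_t|\le C_2$, $|D_{T_i}f|,|D_{T_i}D_{T_j}f|\le M$, $|D_Nu|^{-1}\le1$, and the boundedness of $a^{T_iT_j},a^{T_iN},a^{NN}$, immediate from \eqref{eq:cond_aniso} together with $|D_{T_i}u|\le M$ and $|D_Nu|\le v$ — and is of order at most $|D_Nu|$; this yields $\big(\sum_{i=1}^{n-1}a^{T_iT_i}k_i\big)(D_Nu)^2\le C'|D_Nu|+C''$ with $C',C''$ depending only on $C_2,M,F,G,\partial\Omega,n$ (if $a^{NN}=0$ at $(x_0,\tau)$ the multiplication is vacuous, but then $u_t-\sum a^{T_iT_j}u_{T_iT_j}-2\sum a^{T_iN}u_{NT_i}=0$ gives the same kind of bound directly). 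It remains to bound $\sum_i a^{T_iT_i}k_i$ from below: the identity $a^{T_iT_i}=G\big[(1-v^{-2}(D_{T_i}u)^2)F+2(D_{T_i}u)\langle D_p(F),T_i\rangle+v^2\,T_i^{\,t}[D^2(F)]T_i\big]$, the bound $1-v^{-2}(D_{T_i}u)^2\ge\tfrac{2}{2+M^2}$ (valid since $v^2\ge2+M^2$), $v\ge\sqrt2$, and \eqref{eq:cond_aniso} give $\gamma_1\le a^{T_iT_i}\le\gamma_2$; splitting the sum at $\alpha$ and using $k_i\ge0$ for $i\le\alpha$, $k_i<0$ for $i>\alpha$, one gets $\sum_i a^{T_iT_i}k_i\ge\gamma_1\sum_{i\le\alpha}k_i+\gamma_2\sum_{i>\alpha}k_i>0$ by \eqref{eq:diri_bd_cur_cond}. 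Plugging this into the displayed inequality bounds $|D_Nu|$, hence $|Du(x_0,\tau)|$, hence $\sup_{Q_T}|Du|$.

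I expect two points to be the crux. Conceptually, one must multiply the normal-derivative inequality by $a^{NN}$ and solve the equation for $a^{NN}u_{NN}$ rather than divide by $a^{NN}$: unlike the $a^{T_iT_i}$, the coefficient $a^{NN}$ has no positive lower bound as $|Du|\to\infty$, its isotropic part $v^{-2}(1+|D_Tu|^2)F$ degenerating. Technically, the work lies in the bookkeeping showing that after all substitutions the only $(D_Nu)^2$-term is the curvature term, everything else being genuinely of lower order — this is precisely where the constants $\gamma_1,\gamma_2$ are pinned down by \eqref{eq:cond_aniso}. That the argument survives in every dimension, unlike the contact-angle problem of Section~\ref{sec:ang_grad_est}, is exactly because $u=f$ on $\partial\Omega$ determines all the mixed second tangential derivatives $u_{T_iT_j}$, removing the obstruction noted there.
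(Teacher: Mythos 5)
Your proposal is correct and follows essentially the same route as the paper: apply the $u_t$- and $|Du|^2$-maximum-principle lemmas to push the estimate to the boundary, differentiate $|Du|^2$ tangentially and normally in the principal frame, eliminate the one unknown second derivative $D_ND_Nu$ via the PDE, and isolate the curvature term $\left(\sum_i k_i a^{T_iT_i}\right)|D_Nu|$ against a bounded right-hand side, with $\gamma_1\le a^{T_iT_i}\le\gamma_2$ extracted from \eqref{eq:cond_mob} and \eqref{eq:cond_aniso}. The cosmetic differences are immaterial: you work with the matrix entries $u_{T_iT_j}=T_i^tD^2uT_j$ whereas the paper works with the covariant derivatives $D_{T_i}D_{T_j}u$ (they differ only by the bounded curvature corrections $k_iD_Nu$, which is exactly the source of the leading term in either bookkeeping); you write the end inequality as $\left(\sum_i k_i a^{T_iT_i}\right)(D_Nu)^2\le C'|D_Nu|+C''$ whereas the paper divides once by $|D_Nu|$; and you temporarily assume $|D_Nu|^2\ge 1+M^2$ while the paper uses $|D_Nu|\ge\epsilon=1$ plus monotonicity of $x\mapsto x/(2+x)$ to reach the same bound $2/(2+M^2)$. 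Two small points you make explicit that the paper leaves implicit are worth retaining: one should multiply the inequality $D_Nu\,u_{NN}+\sum_i D_{T_i}u\,u_{NT_i}\le0$ by $a^{NN}\ge0$ rather than divide the equation by $a^{NN}$, since $a^{NN}$ has no uniform positive lower bound; and if $a^{NN}=0$ the PDE itself already bounds the remaining terms. Neither changes the argument, so this is the paper's proof with slightly tidier notation.
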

Here we make some remarks about the assumptions and statement.
\begin{enumerate}
\item 
The condition $\gamma_1 > 0$ holds for small enough $m_1, m_2$ together
with small $M$. The former means that  $F$ is close to 
the isotropic function $\overline{F}$ while the latter indicates that
the Dirichlet boundary condition $f$ cannot change too quickly along the boundary.

\item Since $\gamma_1 < \gamma_2$, the second condition in \eqref{eq:diri_bd_cur_cond} holds when 
$\partial\Omega$ is convex, i.e., $\alpha=n-1$ or if $\left|\sum_{i=\alpha+1}^{n-1} k_i\right|$ is small.

\item Similarly to the strategy in the proof of Theorem \ref{thm:ang_grad_est}, we apply maximum principle to obtain a gradient bound on the boundary. In the current Dirichlet case, all we need is an upper bound for $D_N u$ as all the tangential derivatives of $u$ are given 
by the boundary function $f$.
\end{enumerate}

\begin{proof} 
Notice that Lemmas \ref{lem:ang_ut_est} and \ref{lem:ang_Du_bd} also hold in general dimensions, i.e. for some positive constant $C_3=C_3(u_0, M)$, we have
\begin{equation}
\sup_{Q_T} |u_t| \leq C_3,\,\,\,\text{and}\,\,\,
\sup_{Q_T} |Du|^2=\sup_{\Gamma_T \cup \Omega_0} |Du|^2.
\end{equation}

Now suppose the maximum of $|Du|$ occurs at $(x_0, \tau)\in \Gamma_T$. Note that on $\partial \Omega$,
\begin{equation*}
    |Du|^2=\sum_k |D_{T_k} u|^2 + |D_N u|^2
\,\,\,\text{and}\ \ \ 
D_{T_k} u = D_{T_k} f.
\end{equation*}
Hence, we only need to estimate $|D_N u|$ at $(x_0, \tau)$. Assume $|D_N u|\geq 1$. At this point, by maximum principle, we have
\begin{eqnarray} \label{eq:ell_max_norm}
    D_N |Du|^2 &\leq& 0,\\
\label{eq:ell_max_tan}
    D_{T_i} |Du|^2 &=& 0,\ 1\leq i \leq n-1.
\end{eqnarray}
The last equation becomes
\begin{equation*}
    D_{T_i} \left( |D_N u|^2 + \sum_k |D_{T_k} u|^2 \right)=0,
\end{equation*}
\begin{equation*}
    D_N u \ D_{T_i} D_N u + \sum_k D_{T_k} u \ D_{T_i} D_{T_k} u = 0,
\end{equation*}
\begin{equation}
    D_{T_i} D_N u = -\frac{\sum_k D_{T_k} u \ D_{T_i} D_{T_k} u}{D_N u}.
\end{equation}
Since $\{T_1\  \cdots\  T_{n-1}\  N\}$ is an orthonormal basis, similar to \eqref{eq:ang_ut_rew}, we have
\begin{eqnarray} 
u_t  & = & \sum_{i,j} a^{T_i T_j} D_{T_i} D_{T_j} u  -\sum_i k_i a^{T_i T_i} D_N u + \sum_i a^{T_i N} D_{T_i} D_N u + \sum_i k_i a^{T_i N} D_{T_i} u \nonumber\\
& & + \sum_i a^{N T_i} D_N D_{T_i} u + a^{NN} D_N D_N u, 
\label{eq:4.12}
\end{eqnarray}
where we similarly define the following quantities:
\begin{equation}\label{eq:diri_aTN_def}
\begin{split}
    & a^{T_i T_j}  := \left( T_i^t A \ T_j \right),\quad
a^{T_i N}  := \left( T_i^t A \ N \right), \\
& a^{N T_i}  := \left( N^t A \ T_i \right),\quad
a^{NN}   := \left( N^t A \ N \right).
\end{split}
\end{equation}
Using the relation $D_{T_i} D_N u = k_i D_{T_i} u + D_{T_i} D_N u$ and \eqref{eq:4.12}, we can express $D_ND_Nu$ as
\begin{equation}
\begin{split}
    a^{NN} D_N D_N u\ =\ & u_t - \sum_{i,j} a^{T_i T_j} D_{T_i} D_{T_j} u + \sum_i k_i a^{T_i T_i} D_N u \\ 
    & - 2 \sum_i a^{T_i N} D_{T_i} D_N u - 2 \sum_i k_i a^{T_i N} D_{T_i} u.
\end{split}
\end{equation}
Upon expanding \eqref{eq:ell_max_norm}, we get $D_N u \ D_N D_N u + \sum_k D_{T_k} u \ D_N D_{T_k} u \leq 0$. Using the above expression for $D_ND_Nu$, we obtain,
\begin{multline*}
D_N u \Big( u_t - \sum_{i,j} a^{T_i T_j} D_{T_i} D_{T_j} u + \sum_i k_i a^{T_i T_i} D_N u  - 2 \sum_i a^{T_i N} D_{T_i} D_N u
    - 2 \sum_i k_i a^{T_i N} D_{T_i} u \Big)\\
    + a^{NN} \sum_l D_{T_l} u (D_{T_l} D_N u + k_l D_{T_l} u) \leq 0. 
\end{multline*}
Dividing the above by $|D_N u|$, we obtain,
\begin{equation} \label{eq:diri_ineq1}
\begin{split}
    \left(\sum_i k_i a^{T_i T_i}\right) |D_N u|\  \leq \ &|u_t| + \sum_{i,j} |a^{T_i T_j}| |D_{T_i} D_{T_j} u| \\ 
    & + 2 \sum_i |a^{T_i N}| \frac{\sum_l |D_{T_l} u| |D_{T_i} D_{T_l} u|}{|D_N u|} + 2 \sum_i |k_i| |a^{T_i N}| |D_{T_i} u| \\
    & + \sum_{i,l} |a^{NN}| \frac{|D_{T_l} u| |D_{T_i} u| |D_{T_l} D_{T_i} u|}{|D_N u|^2} + \sum_l |a^{NN}| |k_l| \frac{|D_{T_l} u|^2}{|D_N u|}
\end{split}
\end{equation}
Note that on $\partial \Omega$, we have
$|u_t|=|f_t|, \ |D_{T_i} u|=|D_{T_i} f|, \ |D_{T_i} D_{T_j} u|=|D_{T_i} D_{T_j} f|$
which according to \eqref{eq:diri_f_ass} are all bounded by the constant $M$.  

Next, without loss of generality, we assume $|D_N u| > \epsilon > 0$ for some constant 
$\epsilon$ which can be chosen as any positive number. Now we estimate the terms in 
\eqref{eq:diri_aTN_def}. The crucial estimate is the lower bound for $a^{T_iT_i}$:
\begin{equation}
\begin{split}
    a^{T_i T_i} & = G(\xi) \Bigg( \left( 1 - \frac{|D_{T_i} u|^2}{1+|Du|^2} \right) F(\xi) + 2 D_{T_i} u D^{T_i}_p F(\xi) + v^2 D^{T_i}_p D^{T_i}_p F(\xi) \Bigg) \\
    & \geq G(\xi) \Bigg( \left( 1 - \frac{M^2}{1 + M^2 + \epsilon^2} \right) F(\xi) - \frac{2 M m_1}{v} - m_2  \Bigg) \\
    & \geq g_0 \left(  \frac{1 + \epsilon^2}{1 + M^2 + \epsilon^2} m_0 - \frac{2 M m_1}{\sqrt{1+\epsilon^2}} - m_2 \right).
\end{split}
\label{eq:diri_aTT_low}
\end{equation}
By homogeneity for $F$ and $G$, upper bounds for $a^{T_iT_j}$ and the other terms are simple to obtain.
\begin{equation}\label{eq:diri_aTT_up}
    |a^{T_i T_i}| \leq G(\xi) \left( F(\xi) + \frac{2 M m_1}{v} + m_2  \right) \leq G_0 \left( M_0 + \frac{2 M m_1}{\sqrt{1+\epsilon^2}} + m_2 \right).
\end{equation}
For $i\neq j$, 
\begin{eqnarray*}
    |a^{T_i T_j}| & = & G(\xi) \Bigg| \left(  - \frac{D_{T_i} u \ D_{T_j} u}{1+|Du|^2} F(\xi) \right) +  D_{T_i} u \ D^{T_j}_p F(\xi) + D_{T_j} u \ D^{T_i}_p F(\xi) + v^2 D^{T_i}_p D^{T_j}_p F(\xi) \Bigg| \\
    & \leq & G(\xi) \left(  \frac{M^2}{1+M^2+\epsilon^2} F(\xi) + \frac{2 Mm_1}{v} + m_2 \right) \\
    & \leq & G_0 \left(  \frac{M^2}{1+M^2+\epsilon^2} M_0 + \frac{2 Mm_1}{\sqrt{1+\epsilon^2}} + m_2 \right),
\end{eqnarray*}

\begin{eqnarray*}
    |a^{T_i N}| &=&  G(\xi) \Bigg| \left(  - \frac{D_{T_i} u \ D_{N} u}{1+|Du|^2} F(\xi) \right) +  D_{T_i} u \ D^{N}_p F(\xi) + D_{N} u \ D^{T_i}_p F(\xi) + v^2 D^{T_i}_p D^{N}_p F(\xi) \Bigg| \\
    & \leq & G(\xi) \left( \frac{1}{2} F(\xi) + \frac{M m_1}{v} + m_1 + m_2 \right) \\
    & \leq & G_0 \left( \frac{1}{2} M_0 + (\frac{M}{\sqrt{1+\epsilon^2}} + 1) m_1 + m_2 \right),
\end{eqnarray*}

\begin{eqnarray*}
    |a^{NN}| &= & G(\xi) \left| \left( 1 - \frac{|D_N u|^2}{1+|Du|^2} \right) F(\xi) +  2 D_N u \ D^{N}_p F(\xi) + v^2 D^{N}_p D^{N}_p F(\xi) \right| \\
    & \leq & G(\xi) \left( \left( 1 - \frac{\epsilon^2}{1 + \epsilon^2 + (n-1)M^2} \right) F(\xi) + 2 m_1 + m_2  \right) \\
    & \leq &G_0 \left( \frac{1 + (n-1)M^2}{1 + \epsilon^2 + (n-1)M^2}  M_0 + 2 m_1 + m_2  \right).
\end{eqnarray*}
Hence the right hand side of \eqref{eq:diri_ineq1} is bounded by some constant $C(M,F,\epsilon,n)$. 

Now we simply take $\epsilon=1$. Then the right hand side of \eqref{eq:diri_ineq1} can be bounded by the boundary function $f$. Hence, we have the following estimate,
\begin{equation}    
\left(\sum_i k_i a^{T_i T_i}\right) |D_N u| \leq C(M,K,F,n).
\end{equation}
Then the lower and upper bounds \eqref{eq:diri_aTT_low}, \eqref{eq:diri_aTT_up}
for $a^{T_iT_i}$ become
\begin{eqnarray}
    a^{T_i T_i} &\geq& g_0 \left( \frac{2}{2+M^2} m_0 -\sqrt{2} M m_1 - m_2 \right) =: \gamma_1,\\
    |a^{T_i T_i}| &\leq& G_0 \left( M_0 +\sqrt{2} M m_1 +m_2 \right) =: \gamma_2.
\end{eqnarray}
From assumption \eqref{eq:diri_bd_cur_cond}, we have that 
$\sum_i k_i a^{T_i T_i}$ is bounded from below by a positive number that depends only
on the functions $F$ and $G$ and the boundary $\partial\Omega$ but not on the solution
$u$.
We can then conclude that 
\begin{equation*}
    |D_N u| \leq C(M, \partial\Omega, G, F, n).
\end{equation*}
finishing the proof of the gradient estimate.
\end{proof}

Next we consider the following elliptic problem: 
\begin{equation} 
\left\{ \begin{array}{cllcl}  \label{eq:diri_ell_pro}
     \lambda &=& G (Dw,-1) \ D^2_{p_i p_j} F (Dw,-1) w_{x_i x_j}   \quad & \text{in} & \ \Omega,  \\
     w &=& g \ \quad & \text{on} & \ \partial \Omega, 
\end{array} \right.
\end{equation}
for some constant $\lambda$.
Now this boundary function $g$ is independent of time, and satisfies
\begin{equation}
    |D_{T_i} g|,\ |D_{T_i T_j} g|   \leq M,
\end{equation} 
for some constant $M$. For the existence of solution of the above equation, we can in fact invoke the Fundamental Existence Theorem in \cite[Chapter 1, Section 1]{Se}. This is applicable in our present case because 
similar to the proof in Theorem \ref{thm:ell_grad_est}, by substituting $u_t$ with $\lambda$, we do have the gradient estimate for \eqref{eq:diri_ell_pro}, assuming the existence of a solution. 
Note        that the solution
$w$ exists for any finite value of $\lambda$. This is in contrast to the contact angle case in which $\lambda$ is uniquely determined by the function $\theta$ appearing in the boundary condition \eqref{eq:iso_ang_pro}.

Similar to the contact angle boundary condition, the function $w$ suggests a translating solution $\Tilde{w}(x,t) = w(x) + \lambda t$ to the parabolic problem 
\eqref{eq:diri_pro} with Dirichlet boundary function $f$ and initial condition $u_0(x)$ replaced by $g(x) + \lambda t$ and $u_0(x)=w(x)$, respectively.
With this, the next step is to study the asymptotic behavior of a solution to 
\begin{equation} \label{eq:diri_tran_pro}
    \left\{ \begin{array}{cllcl}
            u_t  &=& G(Du,-1) \  D^2_{p_i p_j} F (Du,-1) u_{x_i x_j}   \quad &  \text{in} & \ Q_T, \\
            u &=& g(\cdot) + \lambda t  \quad &  \text{on} & \ \Gamma_T,  \\
         u(\cdot,0) &=& u_0(\cdot)  \quad \ & \text{in} & \   \Omega_0.
    \end{array}  \right.
\end{equation}

With the gradient estimate in Theorem \ref{thm:ell_grad_est}, we can have the following simple but useful result.
\begin{corollary}
For a solution $u=u(x,t)$ of \eqref{eq:diri_ell_pro}, there exists a constant $C$ such that
\begin{equation}
    |u(x,t)-\lambda t| \leq C.
\end{equation}   
\end{corollary}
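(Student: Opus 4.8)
The plan is to compare the solution $u$ of the parabolic problem \eqref{eq:diri_tran_pro} with the translating solution $\Tilde{w}(x,t)=w(x)+\lambda t$ constructed just above, where $w$ is the (classical, hence $C^2(\overline{\Omega})$) solution of the elliptic Dirichlet problem \eqref{eq:diri_ell_pro}. First I would set $\Tilde{u}(x,t)=u(x,t)-\Tilde{w}(x,t)$ and note that, since $u$ and $\Tilde{w}$ carry the same boundary value $g(x)+\lambda t$ on $\Gamma_T$, we have $\Tilde{u}=0$ on $\Gamma_T$. Subtracting the two parabolic equations and writing the difference of the nonlinear operator as an integral of its derivatives along the segment joining $Du$ and $D\Tilde{w}$, one finds that $\Tilde{u}$ solves a linear parabolic equation $\Tilde{u}_t=\Tilde{a}^{ij}\Tilde{u}_{x_ix_j}+\Tilde{b}^{i}\Tilde{u}_{x_i}$ in $Q_T$, with $\Tilde{a}^{ij}=a^{ij}(Du)$ and $\Tilde{b}^{i}=\Tilde{w}_{x_kx_l}\int_0^1 a^{kl}_{p_i}(\eta Du+(1-\eta)D\Tilde{w})\,d\eta$, exactly as in the Corollary of Section \ref{sec:ang_grad_est}. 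Since $F$ is convex, $[\Tilde{a}^{ij}]$ is positive semidefinite; by Theorem \ref{thm:ell_grad_est} the gradient $|Du|$ is bounded uniformly in $t$, and $w\in C^2(\overline{\Omega})$, so all the coefficients are bounded and the operator is uniformly parabolic, which makes the weak maximum principle applicable.

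Next I would apply the weak maximum principle on $Q_T$: the extrema of $\Tilde{u}$ are attained on the parabolic boundary $\Gamma_T\cup\Omega_0$. On $\Gamma_T$ one has $\Tilde{u}=0$, while on $\Omega_0$ one has $\Tilde{u}(x,0)=u_0(x)-w(x)$. Hence $\sup_{Q_T}|\Tilde{u}|\leq\max\{0,\sup_{\overline{\Omega}}|u_0-w|\}=:C_0$, a constant independent of $t$. Writing $u(x,t)-\lambda t=\Tilde{u}(x,t)+w(x)$ then gives $|u(x,t)-\lambda t|\leq C_0+\sup_{\overline{\Omega}}|w|=:C$, which is the asserted bound.

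I expect no serious obstacle here; this is a direct adaptation of the Corollary in Section \ref{sec:ang_grad_est}, with the contact angle condition replaced by the Dirichlet condition, which becomes homogeneous after subtracting $\Tilde{w}$ and hence trivializes the boundary term. The only point requiring attention is to confirm that the linearized equation for $\Tilde{u}$ is genuinely (uniformly) parabolic with bounded coefficients, so that the maximum principle is legitimate; this is exactly where the a priori gradient estimate of Theorem \ref{thm:ell_grad_est} (together with the smoothness of the fixed function $w$, which controls the Hessian entering $\Tilde{b}^{i}$) is used.
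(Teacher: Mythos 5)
Your proof is correct, but it takes a genuinely different route from the paper's. The paper's proof is a one-liner: on $\partial\Omega$ the function $u(\cdot,t)-\lambda t$ equals the fixed, bounded boundary datum $g$, and the a priori gradient estimate $\sup_{Q_T}|Du|\leq C$ from Theorem \ref{thm:ell_grad_est} then bounds the variation of $u(\cdot,t)-\lambda t$ across the bounded domain $\Omega$ (for any interior $x$, connect to a boundary point and integrate $Du$), giving $|u(x,t)-\lambda t|\leq \sup_{\partial\Omega}|g| + C\,\mathrm{diam}(\Omega)$. No comparison with $\tilde w$ and no maximum principle is invoked at all. You instead mirror Corollary 3.5 of the contact-angle section: subtract $\tilde w(x,t)=w(x)+\lambda t$, linearize along the segment joining $Du$ and $D\tilde w$, and apply the weak maximum principle to $\tilde u = u-\tilde w$, which vanishes on $\Gamma_T$ because the Dirichlet data are the same. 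Both arguments are sound. The paper's version is more elementary and needs nothing about $w$ or the linearized operator; it exploits the Dirichlet condition precisely because it gives pointwise control of $u-\lambda t$ on the boundary (something unavailable in the contact-angle case, which is why the maximum-principle comparison was needed there). Your version is a consistent transplant of the earlier corollary's method and additionally needs $w\in C^2(\overline\Omega)$ (to bound $\tilde b^i$) and boundedness of the coefficients, for which you correctly invoke the gradient estimate; one small caveat is that $[\tilde a^{ij}]$ is only guaranteed positive semidefinite from convexity of $F$, but the weak maximum principle you invoke is still valid in that degenerate setting since the zeroth-order coefficient vanishes, so the argument closes.
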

\begin{proof}
    This is simply due to the fact that $u(x,t)-\lambda t = g$ on the boundary and the gradient estimate obtained in Theorem \ref{thm:ell_grad_est}.
\end{proof}
Then we can follow the idea of \cite[Theorem 4.6]{DKY} to obtain the following
convergence result.
\begin{theorem}
Let $u(x,t)$ be a classical solution to \eqref{eq:diri_tran_pro}. Then there exists a constant $C$ such that $u(x,t)$ will converges to $w(x) + \lambda t + C$ as $t\longrightarrow \infty$. The convergence is uniform in $x\in\Omega$.
\end{theorem}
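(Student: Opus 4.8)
The plan is to reduce the assertion to a statement about the oscillation of the difference between $u$ and the translating solution, and then to run a compactness plus maximum-principle argument in the spirit of the last theorem of Section~3 and of \cite[Theorem~4.6]{DKY}. Write $\tilde w(x,t)=w(x)+\lambda t$, where $w$ solves \eqref{eq:diri_ell_pro}, and set $\tilde u:=u-\tilde w$. Subtracting the two equations and applying the mean value theorem to the coefficient $a^{ij}(\cdot)$ (exactly as in the introduction of $\tilde a^{ij},\tilde b^i$ in Section~3), $\tilde u$ solves a linear equation $\tilde u_t=\tilde a^{ij}\tilde u_{x_ix_j}+\tilde b^i\tilde u_{x_i}$ in $\Omega\times(0,\infty)$, with $\tilde u=0$ on $\partial\Omega\times(0,\infty)$ since $w=g$ on $\partial\Omega$. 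By the gradient estimate of Theorem~\ref{thm:ell_grad_est} together with the a priori bound on $|u_t|$ (Lemma~\ref{lem:ang_ut_est}, which as noted holds in all dimensions), this linear operator is uniformly parabolic with bounded coefficients, and by the Corollary immediately preceding the present theorem, $|\tilde u|\le C$ uniformly.

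Next I would record the monotonicity of the oscillation. Since the linear equation has no zeroth order term, the maximum principle on $\bar\Omega\times[t_1,t_2]$ combined with $\tilde u=0$ on $\partial\Omega$ (so that $M(t):=\max_{\bar\Omega}\tilde u(\cdot,t)\ge 0\ge m(t):=\min_{\bar\Omega}\tilde u(\cdot,t)$) shows that $M(t)$ is non-increasing and $m(t)$ is non-decreasing; hence $\text{osc}(\tilde u)(t)=M(t)-m(t)$ is non-increasing and bounded below by $0$. Let $\delta:=\lim_{t\to\infty}\text{osc}(\tilde u)(t)\ge 0$. If $\delta=0$, then $M(t)$ and $m(t)$ converge monotonically to limits $M_\infty,m_\infty$ with $M_\infty-m_\infty=0$ and $M_\infty\ge 0\ge m_\infty$, forcing $M_\infty=m_\infty=0$, i.e. $\tilde u(\cdot,t)\to 0$ uniformly in $x$; in particular the constant in the statement is $C=0$. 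So the whole matter is to rule out $\delta>0$.

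Assume $\delta>0$. Choose $t_n\to\infty$ and set $u^n(x,t):=u(x,t+t_n)-\lambda t_n$. Each $u^n$ solves the equation in \eqref{eq:diri_tran_pro} on $\Omega\times(-t_n,\infty)$ with boundary values $g(x)+\lambda t$ (the time shift in the boundary data is absorbed by $-\lambda t_n$), satisfies $|u^n(x,t)-\lambda t|\le C$, and has $\sup|Du^n|$ and $\sup|u^n_t|$ bounded uniformly in $n$. Interior parabolic regularity (Krylov--Safonov to get $C^\alpha$ control of $u^n$ and of $Du^n$, then Schauder once the coefficients $a^{ij}(Du^n)$ are Hölder), together with up-to-the-boundary estimates using $g\in C^3$, give uniform local $C^{2+\alpha,1+\alpha/2}$ bounds; hence along a subsequence $u^n\to u^*$ in $C^{2,1}_{\mathrm{loc}}(\bar\Omega\times\mathbb{R})$, with $u^*$ a classical solution, entire in time, of the same equation and boundary condition. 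Put $\tilde u^*:=u^*-w-\lambda t$. Local uniform convergence gives $\text{osc}(\tilde u^*)(t)=\lim_n\text{osc}(\tilde u)(t+t_n)=\delta$ for every $t\in\mathbb{R}$. Applying the monotonicity of the previous step to $\tilde u^*$ over all of $\mathbb{R}$, both $M^*(t)$ and $m^*(t)$ are constant, equal to some $M^*\ge 0\ge m^*$ with $M^*-m^*=\delta>0$, so $M^*>0$ or $m^*<0$; say $M^*>0$ (the other case is symmetric, using the minimum). Then $\tilde u^*$ attains the value $M^*$ at some $(x_0,t_0)$ with $x_0\in\Omega$ (impossible on $\partial\Omega$, where $\tilde u^*=0$), and the strong maximum principle for the linear parabolic equation forces $\tilde u^*\equiv M^*$ on $\Omega\times(-\infty,t_0]$, contradicting $\tilde u^*=0$ on $\partial\Omega$. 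Hence $\delta=0$, which proves the theorem.

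The main obstacle is the compactness step: one must produce the entire-in-time limit $u^*$ as a genuine classical solution carrying the correct boundary data, which requires combining the a priori $C^1$ bounds (making the quasilinear operator uniformly parabolic with bounded coefficients) with interior Krylov--Safonov/Schauder estimates and their up-to-the-boundary counterparts (using $g\in C^3$). Everything else --- the monotonicity of the oscillation and the strong-maximum-principle contradiction --- is routine given the earlier lemmas, although one must be careful that the monotonicity of $M$ and $m$ holds on the entire time axis for the eternal solution $\tilde u^*$, which is exactly where strictness of the oscillation decay, hence the contradiction, is extracted. (An alternative that bypasses the eternal-solution extraction is a direct geometric-decay estimate $\text{osc}(\tilde u)(t+1)\le\gamma\,\text{osc}(\tilde u)(t)$ with $\gamma\in(0,1)$ from boundary Hölder/weak Harnack estimates, but it again rests on the same uniform parabolicity input.)
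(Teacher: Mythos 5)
Your proof is correct, but it follows a genuinely different route from the paper's own sketch, which is the iterative comparison argument of~\cite{DKY}. The paper sandwiches $u$ between the two translates $w(x)+\lambda t+C_1$ and $w(x)+\lambda t+C_2$ of the translating solution, uses the comparison principle to propagate the sandwich forward in time, and then appeals to the strong maximum principle to produce a strict improvement $C_1<C_1'\le C_2'<C_2$ after a fixed time; iterating this and quantifying the contraction (as in~\cite[Theorem 4.6]{DKY}) yields uniform convergence with an exponential rate. You instead reduce to the difference $\tilde u=u-w-\lambda t$, which vanishes on $\partial\Omega$, observe that $\max\tilde u$ is non-increasing and $\min\tilde u$ is non-decreasing (hence the oscillation is non-increasing), and then run the compactness-plus-eternal-solution argument used in the last theorem of Section 3 for the contact-angle case: pass to an entire-in-time limit $u^*$ via Krylov--Safonov/Schauder, note that monotonicity forces both extremes of $\tilde u^*=u^*-w-\lambda t$ to be constant in time, and then extract a contradiction from the strong maximum principle once $\tilde u^*$ attains a positive interior maximum (or negative interior minimum) while vanishing on $\partial\Omega$. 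Each approach has its merits: yours is softer and recycles the machinery already set up for Theorem~3.6, and it also pins down the constant $C$ in the statement as $C=0$ (which is indeed the correct value, since the Dirichlet data uniquely determines $w$ up to nothing, unlike the contact-angle case); the paper's/\cite{DKY} route is more quantitative and delivers the exponential rate of convergence for free. Your parenthetical remark at the end correctly identifies the quantitative alternative as the one the paper actually intends.

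Two small points worth making explicit if you flesh this out. First, the reason both $M^*(t)$ and $m^*(t)$ are constant for the eternal solution is that $M^*-m^*\equiv\delta$ while $M^*$ is non-increasing and $m^*$ is non-decreasing, so for $t_1<t_2$ one has $0\ge M^*(t_2)-M^*(t_1)=m^*(t_2)-m^*(t_1)\ge 0$; this is correct but deserves a line. Second, the uniform parabolicity and the bound on the drift $\tilde b^i$ in the linearization require not only the gradient bound of Theorem~\ref{thm:ell_grad_est} but also a bound on $D^2w$; that comes from elliptic Schauder theory applied to the translator equation once the gradient of $w$ is controlled, and is tacit in the setup but should be flagged.
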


Just a word about the idea of proof. At time $t=0$, we can choose $C_1 < C_2$ such that
$w(x) + C_1 \leq u(x,0) \leq w(x) + C_2$ for all $x\in\Omega$. As both $w(x) + \lambda t$ and $u(x,t)$ are solutions of \eqref{eq:diri_tran_pro}, we have
$w(x) + \lambda t + C_1 \leq u(x,t) \leq w(x) + \lambda t + C_2$ for all $t > 0$.
Then by strong maximum principle, we can choose a $C_1'$ and $C_2'$ satisfying
$C_1 < C_1' < C_2' < C_2$ such that 
$w(x) + \lambda t + C_1' \leq u(x,t) \leq w(x) + \lambda t + C_2'$ for $t > t'$
where $t' > 0$. Iterating this procedure we can in fact keep increasing $C_1'$ and decreasing $C_2'$ such that they converge to a single constant. With more careful estimation, we can in fact show that the convergence is exponential in time, as in
\cite[Theorem 4.6]{DKY}.

\bibliographystyle{plain}
\bibliography{references}

\end{document}